\documentclass{amsproc}
\usepackage{amssymb}
\usepackage{amsmath}
\usepackage{amsfonts}
\usepackage[all]{xy}
\usepackage{amscd}

\setcounter{MaxMatrixCols}{10}

\theoremstyle{plain}

\newtheorem{conclusion}{Conclusion}

\newtheorem{corollary}{Corollary}

\newtheorem{definition}{Definition}

\newtheorem{remark}{Remark}

\newtheorem{theorem}{Theorem}
\numberwithin{equation}{section}
\newcommand{\squarecell}[9]{
\xymatrix@R=1pc@C=5pc{
#1 \ar@{->}[r]|{#2}="2" \ar[d]_{#4} & #3 \ar[d]^{#6} \\
#7 \ar@{->}[r]|{#8}="8" & #9  \\
\ar@{=>}^{#5} "2";"8"}}
\newcommand{\quadrado}[8]{
\xymatrix{
#1 \ar[r]^{#2} \ar[d]_{#4}  & #3  \ar[d]^{#5} \\
#6  \ar[r]^{#7}  &  #8 }}
\newcommand{\cela}[5]{
\xymatrix{
#1  \ar@/^1pc/[r]^{#2}="2"  \ar@/_1pc/[r]_{#4 }="4"  &  #5  \\
\ar@{=>}|{#3 } "2";"4"}}
\newcommand{\rg}[5]{    \xymatrix{#1 \ar@<1ex>[r]^{#2} \ar@<-1ex>[r]_{#4} & #5  \ar[l]|{#3} } }
\newcommand{\splitepi}[4]{    \xymatrix{#1 \ar@<.5ex>[r]^{#2} & #4  \ar@<.5ex>[l]^{#3} } }
\newcommand{\splitext}[6]{    \xymatrix{#1 \ar[r]^{#2} & #3 \ar@<.5ex>[r]^{#4} & #6  \ar@<.5ex>[l]^{#5} } }

\begin{document}
\title{Internal precategories relative to split epimorphisms}
\author{N. Martins-Ferreira}
\address{Polytechnic Institute of Leiria - CDRSP}
\email{nelsonmf@estg.ipleiria.pt}
\urladdr{http://www.estg.ipleiria.pt/\symbol{126}nelsonmf}
\date{25Aug2008.}
\subjclass[2000]{Primary 8D35; Secondary 18E05.}
\keywords{Internal precategory, internal reflexive graph, internal action,
half-reflection, crossed-module, precrossed-module, additive, semi-additive,
binary coproducts, kernels of split epimorphisms, split short five lemma.}
\thanks{The author thanks to Professors G. Janelidze and D. Bourn for much
appreciated help of various kinds.}

\begin{abstract}
For a given category B we are interested in studying internal categorical
structures in B. This work is the starting point, where we consider
reflexive graphs and precategories (i.e., for the purpose of this note, a
simplicial object truncated at level 2). We introduce the notions of
reflexive graph and precategory relative to split epimorphisms. We study the
additive case, where the split epimorphisms are \textquotedblleft coproduct
projections", and the semi-additive case where split epimorphisms are
\textquotedblleft semi-direct product projections". The result is a
generalization of the well known equivalence between precategories and
2-chain complexes. We also consider an abstract setting, containing, for
example, strongly unital categories.
\end{abstract}

\maketitle

\section{Introduction}

A internal reflexive graph in the category Ab, of abelian groups, is
completely determined, up to an isomorphism, by a morphism $%
h:X\longrightarrow B$ and it is of the following form%
\begin{equation*}
\xymatrix{
X \oplus B
\ar@<1ex>[r]^{\pi_2}
\ar@<-1ex>[r]_{[h \, 1]}
& B
\ar[l]|(.4){\iota_2}
}%
.
\end{equation*}%
An internal precategory (i.e., for the purpose of this work, a simplicial
object truncated at level 2) is, in the first place, determined by a diagram%
\begin{equation*}
\xymatrix{
Y
\ar@<1ex>[r]^{a}
\ar@<-1ex>[r]_{u}
& X
\ar[l]|{b}
\ar[r]^{h}
& B
}%
\end{equation*}%
such that 
\begin{equation*}
ab=1=ub\ ,\ ha=hu,
\end{equation*}%
and later, with a further analysis, it simplifies to a 2-chain (see \cite{BR}
and \cite{DBourn} for more general results on this topic), i.e.,%
\begin{equation*}
Z\overset{t}{\longrightarrow }X\overset{h}{\longrightarrow }B\ \ ,\ \ ht=0.
\end{equation*}%
And it is always of the following form%
\begin{equation*}
\xymatrix{
(Z \oplus X) \oplus (X \oplus B)
\ar@<4ex>[rr]^{\pi_2}
\ar@<-4ex>[rr]_{\pi_2 \oplus [h \, 1]}
\ar[rr]|(0.6){[\iota_1[t \, 1], \, 1]}
& &X \oplus B
\ar@<-2ex>[ll]|{\iota_2}
\ar@<2ex>[ll]|(.4){\iota_2 \oplus \iota_2}
\ar@<1ex>[r]^{\pi_2}
\ar@<-1ex>[r]_{[h \, 1]}
& B
\ar[l]|(.4){\iota_2}
}%
.
\end{equation*}

The same result holds for arbitrary additive categories with kernels. In
this work we will be interested in answering the following question:
\textquotedblleft what is the more general setting where one can still have
similar results?".

An old observation of G. Janelidze, says that \textquotedblleft since every
higher dimensional categorical structure is obtained from an n-simplicial
object; and since a simplicial object is build up from split epis; and,
since in Ab, every split epi is simply a biproduct projection, then it is
expected that, when internal to Ab, all the higher dimensional structures
reduce to categories of presheaves". We use this observation as a motivation
for the study of internal categorical structures restricted to a given
subclass of split epis.

In particular in this work we will be interested in the study of the notion
of internal reflexive graph (1-simplicial object) and internal precategory
(2-simplicial object) relative to a given subclass of split epis, such as
for example coproduct projections (in a pointed category with coproducts),
or product projections, or semidirect product projections, or etc.

In some cases the given subclass is saturated (in the words of D. Bourn), as
it happens for example in an additive category with kernels for the subclass
of biproduct projections. However, in general, this is not the case;
nevertheless, in some cases, interesting notions do occur.

Take for example the category of pointed sets and the class of coproduct
projections, that is, consider only split epis of the form%
\begin{equation*}
\xymatrix{
X \sqcup B
\ar@<1ex>[r]^{[0 \, 1]}
& B
\ar[l]|{\iota_2}
}%
;
\end{equation*}%
It follows that a reflexive graph relative to coproduct projections is
completely determined by a morphism%
\begin{equation*}
h:X\longrightarrow B
\end{equation*}%
and it is of the form%
\begin{equation*}
\xymatrix{
X \sqcup B
\ar@<1ex>[r]^{[0 \, 1]}
\ar@<-1ex>[r]_{[h \, 1]}
& B
\ar[l]|{\iota_2}
}%
;
\end{equation*}%
While a precategory is determined by a diagram%
\begin{equation*}
\xymatrix{
Y
\ar@<1ex>[r]^{a}
\ar@<-1ex>[r]_{u}
& X
\ar[l]|{b}
\ar[r]^{h}
& B
}%
\ \ \ ,\ \ \ ab=1=ub\ ,\ ha=hu,
\end{equation*}%
and it is of the form%
\begin{equation*}
\xymatrix{
Y \sqcup (X \sqcup B)
\ar@<4ex>[rr]^{[0 \, 1]}
\ar@<-4ex>[rr]_{a \sqcup [h \, 1]}
\ar[rr]|(0.6){[\iota_1 u, \, 1]}
& &X \sqcup B
\ar@<-2ex>[ll]|{\iota_2}
\ar@<2ex>[ll]|{b \sqcup \iota_2}
\ar@<1ex>[r]^{[0 \, 1]}
\ar@<-1ex>[r]_{[h \, 1]}
& B
\ar[l]|{\iota_2}
}%
\end{equation*}%
where the key factor for this result to hold is the fact that $\iota _{1}$
is the kernel of $[0\ 1]$.

Furthermore, every such reflexive graph $\left( h:X\longrightarrow B\right) $
may be considered as a precategory,%
\begin{equation*}
\xymatrix{
X
\ar@<1ex>[r]^{1}
\ar@<-1ex>[r]_{1}
& X
\ar[l]|{1}
\ar[r]^{h}
& B
}%
,
\end{equation*}%
and it is a internal category if the kernel of $h$ is trivial, which is the
same as saying that the following square%
\begin{equation*}
\quadrado
{  X \sqcup (X \sqcup B) }       {  [0 \, 1] }     {  X \sqcup B   }
{ 1 \sqcup [h \, 1]  }                  {   [h \, 1]  }
{ X \sqcup B  }       {  [0 \, 1] }     {  B   }%
\end{equation*}%
is a pullback.

Specifically, given a morphism $h:X\longrightarrow B$ with trivial kernel
(in pointed sets), the internal category it describes is the following: the
objects are the elements of $B$; the morphisms are the identities $1_{b}$
for each $b\in B$ and also the elements $x\in X$, except for the
distinguished element $0\in X$ that is identified with $0\in B$. The domain
of every $x$ in $X$ is $0\in B$ and the codomain is $h\left( x\right) $.
Since all arrows (except identities) start from $0\in B$, and because the
kernel of $h$ is trivial, two morphisms $x$ and $x^{\prime }$ (other than $0$%
) never compose. The picture is a star with all arrows from the origin with
no nontrivial loops.

On the other hand, again in pointed sets, if considering the subclass of
split epis that are product projections, that is, of the form%
\begin{equation*}
\xymatrix{
X \times B
\ar@<1ex>[r]^{\pi_2}
& B
\ar[l]^(0.4){<0,1>}
}%
;
\end{equation*}%
then the following result is obtained.

A internal reflexive graph relative to product projections is given by a map%
\begin{equation*}
\xi :X\times B\longrightarrow B;\ \ \ \left( x,b\right) \mapsto x\cdot b
\end{equation*}%
such that $0\cdot b=b$ for all $b\in B$; and it is of the form%
\begin{equation*}
\xymatrix@C=3pc{
X \times B
\ar@<1ex>[r]^{\pi_2}
\ar@<-1ex>[r]_{\xi}
& B
\ar[l]|(0.4){<0,1>}
}%
.
\end{equation*}%
A internal precategory, relative to product projections, is given by%
\begin{equation*}
Y\times \left( X\times B\right) \overset{\mu }{\longrightarrow }X\times B%
\overset{\xi }{\longrightarrow }B\ \ \ ,\ \ \ Y\overset{\alpha }{%
\overrightarrow{\underset{\beta }{\longleftarrow }}}X
\end{equation*}%
such that 
\begin{eqnarray*}
\alpha \beta &=&1, \\
\mu \left( y,x,b\right) &=&\left( y+_{b}x,\ b\right) , \\
0+_{b}x &=&x=\beta \left( x\right) +_{b}0 \\
\left( y+_{b}x\right) \cdot b &=&\alpha \left( y\right) \cdot \left( x\cdot
b\right) ;
\end{eqnarray*}%
and it is of the form%
\begin{equation*}
\xymatrix@C=4pc{
Y \times (X \times B)
\ar@<4ex>[r]^{\pi_2}
\ar@<-4ex>[r]_{\alpha \times \xi}
\ar[r]|{\mu}
& X \times B
\ar@<-2ex>[l]|(0.4){<0,1>}
\ar@<2ex>[l]|(.4){\beta \times <0,1>}
\ar@<1ex>[r]^{\pi_2}
\ar@<-1ex>[r]_{\xi}
& B
\ar[l]|{<0,1>}
}%
.
\end{equation*}%
In particular if $X=Y$ and $\alpha =\beta =1$, we obtain a internal category
(not necessarily associative), because the square%
\begin{equation*}
\quadrado
{ X\times (X\times B)  }       { \pi_2  }     { X\times B    }
{ 1\times \xi  }                  {  \xi   }
{ X\times B  }       {  \pi_2 }     {  B   }%
\end{equation*}%
is a pullback.

This means that a internal category in pointed sets and relative to product
projections is given by two maps%
\begin{eqnarray*}
\mu &:&X\times \left( X\times B\right) \longrightarrow X\times B;\ \ \left(
x,x^{\prime },b\right) \mapsto \left( x+_{b}x^{\prime },b\right) \\
\xi &:&X\times B\longrightarrow B;\ \ \ \left( x,b\right) \mapsto x\cdot b
\end{eqnarray*}%
such that%
\begin{eqnarray*}
0+_{b}x &=&x=x+_{b}0 \\
0\cdot b &=&b \\
\left( x+_{b}x^{\prime }\right) \cdot b &=&x\cdot \left( x^{\prime }\cdot
b\right)
\end{eqnarray*}%
and in order to have associativity one must also require the additional
condition%
\begin{equation*}
\left( x^{\prime \prime }+_{(x\cdot b)}x^{\prime }\right) +_{b}x=x^{\prime
\prime }+_{b}\left( x^{\prime }+_{b}x\right) .
\end{equation*}%
Specifically, given a structure as above in pointed sets, the corresponding
internal category that it represents is the following. The objects are the
elements of $B$. The morphisms are pairs $\left( x,b\right) $ with domain $b$
and codomain $x\cdot b$. The composition of%
\begin{equation*}
b\overset{(x,b)}{\longrightarrow }x\cdot b\overset{(x^{\prime },b^{\prime })}%
{\longrightarrow }x^{\prime }\cdot \left( x\cdot b\right)
\end{equation*}%
is the pair $\left( x^{\prime }+_{b}x,b\right) $.

We will observe that for a given subclass of split epis, when the following
two properties are present for every split epi $\left( A,\alpha ,\beta
,B\right) $ in the subclass:\newline
(a) the morphism $\alpha :A\longrightarrow B$ has a kernel, say $%
k:X\longrightarrow A$\newline
(b) the pair $\left( k,\beta \right) $ is jointly epic\newline
then a reflexive graph relative to the given subclass is determined by a
split epi in the subclass, say $\left( A,\alpha ,\beta ,B\right) $ together
with a \emph{central morphism}%
\begin{equation*}
h:X\longrightarrow B
\end{equation*}%
where a central morphism (see \cite{Bourn&Gran}) is such that there is a
(necessarily unique) morphism, denoted by $[h\ 1]:A\longrightarrow B$ with
the property%
\begin{equation*}
\lbrack h\ 1]\beta =1\ \ ,\ \ [h\ 1]k=h,
\end{equation*}%
where $k:X\longrightarrow A$ is the kernel of $\alpha $.

In the case of Groups, considering the subclass of split epis given by
semi-direct product projections%
\begin{equation*}
\xymatrix{
X \rtimes B
\ar@<1ex>[r]^{\pi_2}
& B
\ar[l]^(0.4){<0,1>}
}%
,
\end{equation*}%
the notion of central morphism $h:X\longrightarrow B$ (together with a
semidirect product projection, or an internal group action) corresponds to
the usual definition of pre-crossed module.

This fact may lead us to consider an abstract notion of semidirect product
as a diagram in a category satisfying some universal property.

In \cite{Berndt} O. Berndt proposes the categorical definition of semidirect
products as follows: the semidirect product of $X$ and $B$ (in a pointed
category) is a diagram%
\begin{equation*}
X\overset{k}{\longrightarrow }A\overset{\alpha }{\overrightarrow{\underset{%
\beta }{\longleftarrow }}}B
\end{equation*}%
such that $\alpha \beta =1$ and $k=\ker \alpha $.

We now see that it would be more reasonable to adjust this definition as
follows: in a pointed category, the semidirect product of $X$ and $B$,
denoted $X\rtimes B$, is defined together with two morphisms%
\begin{equation*}
X\overset{k}{\longrightarrow }X\rtimes B\overset{\beta }{\longleftarrow }B
\end{equation*}%
satisfying the following three conditions:\newline
(a) the pair $\left( k,\beta \right) $ is jointly epic\newline
(b) the zero morphism%
\begin{equation*}
0:X\longrightarrow B
\end{equation*}%
is central, that is, there exists a (necessarily unique) morphism $[0\
1]:X\rtimes B\longrightarrow B$ with $[0\ 1]\beta =1\ \ $and$\ \ [0\ 1]k=0$%
\newline
(c) $k$ is the kernel of $[0\ 1]$.

We must add that this object $X\rtimes B$ may not be uniquely determined
(even up to isomorphism), to achieve that we simply require the pair $\left(
k,\beta \right) $ to be universal with the above properties.

We also remark that we have not investigate further the consequences of such
a definition. It will only be done in some future work. We choose to mention
it at this point because it is related with the present work.

Another example, of considering internal categories relative to split epis,
may be found in \cite{Patch} where A. Patchkoria shows that, in the category
of Monoids, the notion of internal category relative to semidirect product
projections is in fact equivalent to the notion of a Schreier category.

This work is organized as follows.

First we recall some basic definitions, and introduce a concept that is
obtained by weakening the notion of reflection, so that we choose to call it
half-reflection.

Next we study the case of additivity, and find minimal conditions on a
category $\mathbf{B}$ in order to have%
\begin{eqnarray*}
RG\left( \mathbf{B}\right) &\sim &Mor\left( \mathbf{B}\right) \\
PC\left( \mathbf{B}\right) &\sim &2\text{-}Chains\left( \mathbf{B}\right)
\end{eqnarray*}%
the usual equivalences between reflexive graphs and morphisms in $\mathbf{B}$%
, precategories and 2-chains in $\mathbf{B}$. We show that this is the case
exactly when $\mathbf{B}$ is pointed (but not necessarily with a zero
object), has binary coproducts and kernels of split epis, and satisfies the
following two conditions (see Theorem \ref{Th 2}):\newline
(a) $\iota _{1}$ is the kernel of $[01]$%
\begin{equation*}
X\overset{\iota _{1}}{\longrightarrow }X\sqcup B\overset{[0\ 1]}{%
\longrightarrow }B
\end{equation*}%
$\newline
$(b) the split short five lemma holds.

Later we investigate the same notions, and essentially obtain the same
results, for the case of semi-additivity, by replacing coproduct projections
by semidirect product projections, where the notion of semi-direct product
is associated with the notion of internal actions in the sense of \cite{GJ1}
and \cite{JB}.

At the end we describe the same situation for a general setting, specially
designed to mimic internal actions and semidirect products. An application
of the results is given for the category of unitary magmas with right
cancellation.

\section{Definitions}

\begin{definition}[reflection]
A functor $I:\mathbf{A}\longrightarrow \mathbf{B}$ is a reflection when
there is a functor%
\begin{equation*}
H:\mathbf{B}\longrightarrow \mathbf{A}
\end{equation*}%
and a natural transformation%
\begin{equation*}
\rho :1_{\mathbf{A}}\longrightarrow HI
\end{equation*}%
satisfying the following conditions%
\begin{eqnarray*}
IH &=&1_{\mathbf{B}} \\
I\circ \rho &=&1_{I} \\
\rho \circ H &=&1_{H}.
\end{eqnarray*}
\end{definition}

\begin{definition}[half-reflection]
A pair of functors%
\begin{equation*}
\xymatrix{\mathbf{A} \ar@<0.5ex>[r]^{I} & \mathbf{B} \ar@<0.5ex>[l]^{G} },
\ \ \ ,\ \ \ \ IG=1_{\mathbf{B}}
\end{equation*}%
is said to be a half-reflection if there is a natural transformation%
\begin{equation*}
\pi :1_{\mathbf{A}}\longrightarrow GI
\end{equation*}%
such that%
\begin{equation*}
I\circ \pi =1_{I}.
\end{equation*}
\end{definition}

\begin{theorem}
For a half-reflection $\left( I,G,\pi \right) $ we always have%
\begin{equation}
\xymatrix{
1
\ar[r]^{\pi}
\ar[rd]_{\pi}
& GI
\ar[d]^{\pi \circ GI}
\\
& GI
}%
.  \label{eq1}
\end{equation}
\end{theorem}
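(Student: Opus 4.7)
The plan is to read the diagram as asserting the equality of natural transformations $\pi_{GI} \circ \pi = \pi$, where $\pi_{GI}$ denotes the right whiskering (which makes sense because $IG = 1_{\mathbf{B}}$ forces $GIGI = GI$, so $\pi_{GI}\colon GI \to GI$). My approach has two ingredients: naturality of $\pi$ itself, and the axiom $I\circ\pi = 1_I$.

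First I would instantiate naturality of $\pi\colon 1_{\mathbf{A}} \to GI$ on the morphism $\pi_A\colon A \to GIA$. The naturality square for an arrow $f\colon A \to B$ reads $GIf \circ \pi_A = \pi_B \circ f$, and specializing $B = GIA$, $f = \pi_A$ yields
\begin{equation*}
GI(\pi_A)\circ \pi_A \;=\; \pi_{GIA}\circ \pi_A.
\end{equation*}
This is the key identity: it equates the ``outer'' whiskering $\pi_{GI}\circ\pi$ with the ``inner'' whiskering $GI\pi\circ\pi$.

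Second, I would unpack the half-reflection axiom $I\circ\pi = 1_I$ componentwise: for every object $A$, $I(\pi_A) = 1_{IA}$. Applying the functor $G$ gives $GI(\pi_A) = G(1_{IA}) = 1_{GIA}$, i.e.\ $GI\pi = 1_{GI}$ as a natural transformation. Plugging this into the naturality identity above gives $\pi_{GIA}\circ \pi_A = \pi_A$, which is exactly the commutativity of the triangle in (\ref{eq1}).

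There is essentially no obstacle here; the only subtlety is parsing the notation in the diagram. One must recognize that ``$\pi \circ GI$'' in the label of the vertical arrow means whiskering $\pi$ on the right by $GI$, and that the equality $IG = 1_{\mathbf{B}}$ built into the definition of half-reflection is what makes the target of this whiskered transformation be $GI$ rather than $GIGI$. Once this bookkeeping is clear, the proof reduces to the two-line argument above.
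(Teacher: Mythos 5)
Your proof is correct and follows exactly the paper's argument: naturality of $\pi$ instantiated at the morphism $\pi_A$ gives $GI(\pi_A)\circ\pi_A=\pi_{GIA}\circ\pi_A$, and the axiom $I\circ\pi=1_I$ (together with $IG=1_{\mathbf{B}}$) collapses $GI(\pi_A)$ to $1_{GIA}$. Nothing to add.
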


\begin{proof}
By naturality of $\pi $ we have%
\begin{equation*}
\quadrado
{ A  }       { \pi_A  }     {  GIA   }
{ \pi_A  }                  {  GI(\pi_A)   }
{ GIA  }       {  \pi_{GIA} }     {  GI(GIA)   }%
\end{equation*}%
but $GI(GIA)=GIA$ and $GI\left( \pi _{A}\right) =1_{GIA}$.
\end{proof}

When it exists, the natural transformation $\pi :1\longrightarrow GI$ is 
\emph{essentially} unique, in the sense that any other such, say $\pi
^{\prime }:1\longrightarrow GI$ (with $I\circ \pi ^{\prime }=1_{I}$), is of
the form%
\begin{equation*}
\pi _{A}^{\prime }=\pi _{GIA}^{\prime }\pi _{A}.
\end{equation*}
Under which conditions is it really unique?

The name half-reflection is motivated because if instead of $\left( \ref{eq1}%
\right) $ we have $\pi \circ G=1_{G}$ then the result is a reflection.

For any category $\mathbf{B}$ we consider the category of internal reflexive
graphs in $\mathbf{B}$, denoted $RG\left( \mathbf{B}\right) $ as usual:%
\newline
Objects are diagrams in $\mathbf{B}$ of the form%
\begin{equation*}
\xymatrix{
C_1
\ar@<1ex>[r]^{d}
\ar@<-1ex>[r]_{c}
& C_0
\ar[l]|{e}
}%
\ \ ,\ \ de=1=ce;
\end{equation*}%
Morphisms are pairs $\left( f_{1},f_{0}\right) $ making the obvious squares
commutative in the following diagram%
\begin{equation*}
\xymatrix{
 C_1
\ar@<1ex>[r]^{d}
\ar@<-1ex>[r]_{c}
\ar[d]_{f_1}
& C_0
\ar[d]_{f_0}
\ar[l]|{e}
\\
 C'_1
\ar@<1ex>[r]^{d}
\ar@<-1ex>[r]_{c}
& C'_0
\ar[l]|{e'}
}%
.
\end{equation*}%
We will also consider the category of internal precategories in $\mathbf{B}$%
, denoted $PC\left( \mathbf{B}\right) $, where objects are diagrams of the
form%
\begin{equation*}
\xymatrix{
C_2
\ar@<2ex>[r]^{\pi_2}
\ar@<-2ex>[r]_{\pi_1}
\ar[r]|{m}
& C_1
\ar@<-1ex>[l]|{e_2}
\ar@<1ex>[l]|{e_1}
\ar@<1ex>[r]^{d}
\ar@<-1ex>[r]_{c}
& C_0
\ar[l]|{e}
}%
\end{equation*}%
such that%
\begin{equation}
\xymatrix{
C_2
\ar[r]^{\pi_2}
\ar[d]_{\pi_1}
& C_1
\ar@<1ex>[l]^{e_2}
\ar[d]_{c}
\\
C_1
\ar[r]^{d}
\ar@<-1ex>[u]_{e_1}
& C_0
\ar@<1ex>[l]^{e}
\ar@<-1ex>[u]_{e}
}
\label{split quare 1}
\end{equation}%
is a split square (i.e. a split epi in the category of split epis), so that
in particular we have%
\begin{equation}
de=1_{C_{0}}=ce  \label{RGraph condition in precat}
\end{equation}%
and furthermore, the following three conditions are satisfied%
\begin{gather}
dm=d\pi _{2}  \label{dom_preserved in a precat} \\
cm=c\pi _{1}.  \label{cod_preserved in a precat}
\end{gather}%
\begin{equation}
me_{1}=1_{C_{1}}=me_{2};  \label{me1 is 1}
\end{equation}%
and obvious morphisms.

A precategory in this sense becomes a category\footnote{%
In fact it is not quite a category because we are not considering
associativity; also the term precategory is often used when $\left(\ref{me1
is 1}\right) $ is not present; we also observe that in many interesting
cases (for example in Mal'cev categories) assuming only $\left(\ref{me1 is 1}%
\right) $ and the fact that $\left(\ref{split quare 1}\right) $ is a
pullback, then the resulting structure is already an internal category.}  if
the top and left square in $\left( \ref{split quare 1}\right) $ is a
pullback.

\begin{definition}
A category is said to have coequalizers of reflexive graphs if for every
reflexive graph%
\begin{equation*}
\xymatrix{
C_1
\ar@<1ex>[r]^{d}
\ar@<-1ex>[r]_{c}
& C_0
\ar[l]|{e}
}%
\ \ ,\ \ de=1=ce
\end{equation*}%
the coequalizer of $d$ and $c$ exists.
\end{definition}

\begin{definition}[pointed category]
A pointed category is a category enriched in pointed sets. More
specifically, for every pair $X,Y$ of objects, there is a specified
morphism, $0_{X,Y}:X\longrightarrow Y$ with the following property:%
\begin{equation*}
X\overset{0_{X,Y}}{\longrightarrow }Y\overset{f}{\longrightarrow }Z\overset{%
0_{Z,W}}{\longrightarrow }W
\end{equation*}%
\begin{eqnarray}
0_{Z,W}f &=&0_{Y,W}  \label{eq2} \\
f0_{X,Y} &=&0_{X,Z}.  \label{eq3}
\end{eqnarray}
\end{definition}

\begin{definition}[additive category]
An additive category is an $Ab$-category with binary biproducts.
\end{definition}

Observe that on the contrary to the usual practice we are not considering
the existence of a null object, neither in pointed nor in additive
categories.

\section{Additivity}

Let $\mathbf{B}$ be any category and consider the pair of functors%
\begin{equation*}
\xymatrix{\mathbf{B\times B} \ar@<0.5ex>[r]^{I} & \mathbf{B} \ar@<0.5ex>[l]^{G} }
\end{equation*}%
with $I\left( X,B\right) =B$ and $G\left( B\right) =\left( B,B\right) .$

\begin{theorem}
The above pair $\left( I,G\right) $ is a half-reflection if and only if the
category $\mathbf{B}$ is pointed.
\end{theorem}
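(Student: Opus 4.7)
The plan is to unpack the data of the half-reflection $(I,G,\pi)$ componentwise in $\mathbf{B}\times\mathbf{B}$ and match it exactly with the data of a pointed structure on $\mathbf{B}$.

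First I note that $IG(B) = I(B,B) = B$, so $IG = 1_{\mathbf{B}}$ holds automatically. A natural transformation $\pi : 1_{\mathbf{B}\times\mathbf{B}} \longrightarrow GI$ consists, at each object $(X,B)$, of a morphism $\pi_{(X,B)}:(X,B)\longrightarrow(B,B)$ in $\mathbf{B}\times\mathbf{B}$, i.e.\ a pair $(\pi^{1}_{(X,B)},\pi^{2}_{(X,B)})$ with $\pi^{1}_{(X,B)}:X\to B$ and $\pi^{2}_{(X,B)}:B\to B$. The condition $I\circ\pi = 1_{I}$ forces $\pi^{2}_{(X,B)} = 1_{B}$, so the only genuine data is a family of morphisms $0_{X,B}:=\pi^{1}_{(X,B)}:X\longrightarrow B$ indexed by pairs of objects.

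Next I translate naturality in $(X,B)$ along an arrow $(f,g):(X,B)\to(X',B')$ (using $GI(f,g)=(g,g)$). Since the second component gives only the tautology $g\cdot 1_{B} = 1_{B'}\cdot g$, the content is the single equation
\begin{equation*}
g\,\pi^{1}_{(X,B)} = \pi^{1}_{(X',B')}\,f.
\end{equation*}
Specializing $g=1_{B}$ yields $0_{X,B} = 0_{X',B}\circ f$ for every $f:X\to X'$, and specializing $f=1_{X}$ yields $g\circ 0_{X,B} = 0_{X,B'}$ for every $g:B\to B'$. These are precisely the two axioms in the definition of a pointed category, so a half-reflection determines a pointed structure.

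For the converse, assume $\mathbf{B}$ is pointed and set $\pi_{(X,B)}:=(0_{X,B},1_{B})$. Then $I\circ\pi = 1_{I}$ trivially, and naturality of $\pi$ follows by recombining the two pointed axioms into the single equation displayed above. The only place where one must be careful is keeping straight the direction of the naturality squares in the product category and matching them to the left/right absorption axioms of zero morphisms; once that is done, both implications are immediate from the definitions, and no further obstacle is expected.
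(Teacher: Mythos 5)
Your proof is correct and follows essentially the same route as the paper: identify the component $\pi_{(X,B)}$ with a family $0_{X,B}:X\to B$ (the second component being forced to be $1_B$ by $I\circ\pi=1_I$), and read off the two zero-morphism axioms from the naturality squares along $(f,1)$ and $(1,g)$, with the converse given by $\pi_{(X,B)}=(0_{X,B},1_B)$. No gaps.
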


\begin{proof}
If $\mathbf{B}$ is pointed simply define%
\begin{equation*}
\pi _{\left( X,B\right) }:\left( X,B\right) \longrightarrow \left( B,B\right)
\end{equation*}%
as $\pi _{\left( X,B\right) }=\left( 0_{X,B},1_{B}\right) .$\newline
Now suppose there is a natural transformation%
\begin{equation*}
\pi :1_{\mathbf{BxB}}\longrightarrow GI,
\end{equation*}%
such that $I\circ \pi =1_{I}$, this is the same as having for every pair $%
X,B $ in $\mathbf{B}$ a specified morphism%
\begin{equation*}
\pi _{X,B}:X\longrightarrow B
\end{equation*}%
and conditions $\left( \ref{eq2}\right) $ and $\left( \ref{eq3}\right) $
follow by naturality:%
\begin{equation*}
\quadrado
{ (Y,W)  }       { (\pi_{Y,W} , 1)  }     {  (W,W)   }
{ (f,1)  }                  {  (1,1)   }
{ (Z,W)  }       { (\pi_{Z,W}, 1)  }     {  (W,W)   }%
,\ \ \ \ 
\quadrado
{ (X,Y)  }       { (\pi_{X,Y} , 1)  }     {  (Y,Y)   }
{ (1,f)  }                  {  (f,f)   }
{ (X,Z)  }       { (\pi_{X,Z}, 1)  }     { ( Z,Z)   }%
.
\end{equation*}
\end{proof}

\begin{theorem}
The functor $G$ as above admits a left adjoint if and only if the category $%
\mathbf{B}$ has binary coproducts.
\end{theorem}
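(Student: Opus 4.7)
The plan is to unfold the adjunction hom-set isomorphism and recognize it as the universal property of the binary coproduct. Observe that for any object $C$ of $\mathbf{B}$, the hom-set in the product category decomposes as
\begin{equation*}
\mathrm{Hom}_{\mathbf{B}\times\mathbf{B}}\bigl((X,B),G(C)\bigr)=\mathrm{Hom}_{\mathbf{B}\times\mathbf{B}}\bigl((X,B),(C,C)\bigr)\cong \mathrm{Hom}_{\mathbf{B}}(X,C)\times \mathrm{Hom}_{\mathbf{B}}(B,C),
\end{equation*}
which is exactly the functor represented by $X\sqcup B$ when the coproduct exists. So the task reduces to transporting this observation in both directions.

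For the ``if'' direction, I would assume that $\mathbf{B}$ has binary coproducts and define a candidate left adjoint
\begin{equation*}
F\colon \mathbf{B}\times\mathbf{B}\longrightarrow \mathbf{B},\qquad F(X,B)=X\sqcup B,
\end{equation*}
with the obvious action on morphisms via the functoriality of $\sqcup$. The unit $\eta_{(X,B)}\colon (X,B)\longrightarrow GF(X,B)=(X\sqcup B,X\sqcup B)$ is given componentwise by the two coproduct injections $(\iota_{1},\iota_{2})$, and the verification that the pair $(F,\eta)$ satisfies the universal property of a left adjoint is exactly the statement that any pair $(f,g)\colon (X,B)\to (C,C)$ factors uniquely through $(\iota_{1},\iota_{2})$ via a morphism $[f\;g]\colon X\sqcup B\to C$, which is the defining universal property of the coproduct.

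For the ``only if'' direction, I would assume a left adjoint $F\dashv G$ exists and, for each pair $(X,B)$, use the adjunction isomorphism together with the decomposition of the product hom-set displayed above to conclude that $F(X,B)$ represents the functor $C\mapsto \mathrm{Hom}(X,C)\times \mathrm{Hom}(B,C)$. By the Yoneda lemma, this forces $F(X,B)$ to be a binary coproduct of $X$ and $B$, with the two injections recovered as the components of the unit $\eta_{(X,B)}$; hence $\mathbf{B}$ has binary coproducts.

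No step looks genuinely hard, since the argument is essentially a direct unpacking of the definitions. The only point requiring a little care is making sure the identification of the functor $\mathrm{Hom}((X,B),G(-))$ with $\mathrm{Hom}(X,-)\times \mathrm{Hom}(B,-)$ is natural in $C$, so that the representability conclusion in the ``only if'' direction legitimately yields a coproduct together with its universal injections rather than merely an isomorphism of underlying sets.
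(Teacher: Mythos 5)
Your argument is correct and is exactly the standard fact the paper invokes: the paper's proof is the one-line remark that binary coproducts are precisely a left adjoint to the diagonal functor $G(B)=(B,B)$, which is what you prove in detail via the hom-set decomposition and Yoneda. No discrepancy with the paper's approach; you have simply written out the well-known argument it cites.
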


\begin{proof}
As it is well known, coproducts are obtained as the left adjoint to the
diagonal functor.
\end{proof}

\begin{theorem}
Given a half-reflection $\left( I,G,\pi \right) $%
\begin{equation*}
\xymatrix{\mathbf{A} \ar@<0.5ex>[r]^{I} & \mathbf{B} \ar@<0.5ex>[l]^{G} } \ \ \ ,\ \ \ \ \pi :1_{\mathbf{A}}\longrightarrow GI,
\end{equation*}%
if the functor $G$ admits a left adjoint%
\begin{equation*}
\left( F,G,\eta ,\varepsilon \right) ,
\end{equation*}%
then, there is a canonical functor%
\begin{equation*}
\mathbf{A}\longrightarrow Pt\left( \mathbf{B}\right)
\end{equation*}%
sending an object $A\in \mathbf{A}$ to the split epi%
\begin{equation*}
\xymatrix{
FA
\ar@<0.5ex>[r]^{\varepsilon_{IA} F(\pi_A)}
& IA
\ar@<0.5ex>[l]^{I(\eta_A)}
}%
.
\end{equation*}
\end{theorem}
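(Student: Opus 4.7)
The plan is to verify two things: first, that $\left(\varepsilon_{IA}F(\pi_A),\, I(\eta_A)\right)$ really is a split epi in $\mathbf{B}$; second, that the assignment $A\mapsto \left(FA\rightleftarrows IA\right)$, together with $f\mapsto (F(f), I(f))$ on arrows, is functorial into $Pt(\mathbf{B})$.

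For the split-epi condition one needs $\varepsilon_{IA}\circ F(\pi_A)\circ I(\eta_A) = 1_{IA}$. My approach is to first simplify $F(\pi_A)\circ I(\eta_A)$ via naturality of $\eta$ at $\pi_A\colon A\to GIA$, namely $\eta_{GIA}\circ \pi_A = GF(\pi_A)\circ\eta_A$. Applying $I$ and invoking both $I\circ\pi=1_I$ (so $I(\pi_A)=1_{IA}$) and the strict equality $IG=1_{\mathbf{B}}$ (so $IGF(\pi_A)=F(\pi_A)$) collapses this to $I(\eta_{GIA}) = F(\pi_A)\circ I(\eta_A)$. Left-composing with $\varepsilon_{IA}$ then reduces the problem to $\varepsilon_{IA}\circ I(\eta_{GIA}) = 1_{IA}$, which I would obtain by applying $I$ to the triangle identity $G(\varepsilon_{IA})\circ \eta_{GIA} = 1_{GIA}$ and once more reading $IG$ as the strict identity.

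Functoriality amounts to two commutative squares for each $f\colon A\to A'$. The square involving $\varepsilon_{IA}F(\pi_A)$ follows by chaining naturality of $\varepsilon$, functoriality of $F$, and naturality of $\pi$ (in the form $GI(f)\circ \pi_A = \pi_{A'}\circ f$). The square involving $I(\eta_A)$ is obtained by applying $I$ to the naturality square of $\eta$ at $f$, using $IG=1_{\mathbf{B}}$ to identify $IGF(f)$ with $F(f)$. Preservation of identities and composition is then immediate. There is no real obstacle here: the calculation is essentially a single triangle-identity application dressed up with the half-reflection data. The only subtlety worth flagging is that $IG=1_{\mathbf{B}}$ is being used as a \emph{strict} equality on morphisms — it is precisely this strictness that lets the triangle identity, which a priori lives in $\mathbf{A}$, be transported into an equality on $\mathbf{B}$-morphisms involving $I(\eta)$ rather than $G(\varepsilon)$, thereby closing the split-epi check.
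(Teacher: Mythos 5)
Your proof is correct and follows essentially the same route as the paper: the paper applies $I$ to the single adjunction identity $\pi_A=G\left(\varepsilon_{IA}F(\pi_A)\right)\eta_A$ and uses $I\circ\pi=1_I$ together with the strict equality $IG=1_{\mathbf{B}}$, which is exactly your naturality-of-$\eta$ plus triangle-identity computation packaged into one step. Your additional verification of functoriality on morphisms is a welcome completion of a point the paper leaves implicit, but it does not change the substance of the argument.
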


\begin{proof}
We only have to prove%
\begin{equation*}
\varepsilon _{IA}F\left( \pi _{A}\right) I\left( \eta _{A}\right) =1_{IA}.
\end{equation*}%
Start with%
\begin{equation*}
\pi _{A}=G\left( \varepsilon _{IA}F\left( \pi _{A}\right) \right) \eta _{A}
\end{equation*}%
and apply $I$ to both sides to obtain%
\begin{equation*}
I\left( \pi _{A}\right) =\varepsilon _{IA}F\left( \pi _{A}\right) I\left(
\eta _{A}\right) ,
\end{equation*}%
by definition we have $I\left( \pi _{A}\right) =1_{IA}$.
\end{proof}

In particular if $\mathbf{B}$ is pointed and has binary coproducts we have
the canonical functor%
\begin{equation*}
\mathbf{B\times B}\overset{T}{\longrightarrow }Pt\left( \mathbf{B}\right)
\end{equation*}%
sending a pair $\left( X,B\right) $ to the split epi%
\begin{equation*}
\xymatrix{
X \sqcup B
\ar@<0.5ex>[r]^{[0 \, 1]}
& B
\ar@<0.5ex>[l]^{\iota_2}
}%
.
\end{equation*}

\begin{theorem}
Let $\mathbf{B}$ be a pointed category with binary coproducts. The canonical
functor $\mathbf{B\times B}\overset{T}{\longrightarrow }Pt\left( \mathbf{B}%
\right) $ admits a right adjoint, $S$, such that $IS=I^{\prime }$%
\begin{equation*}
\xymatrix{
\mathbf{B\times B}   
\ar[rd]_{I}
& & Pt(\mathbf{B})
\ar[ll]_{S}
\ar[ld]^{I'}
\\
& \mathbf{B}
}%
\end{equation*}%
if and only if the category $\mathbf{B}$ has kernels of split epis.\newline
The functor $I^{\prime }$ sends a split epi $\left( A,\alpha ,\beta
,B\right) $ to $B$.
\end{theorem}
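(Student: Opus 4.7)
My plan is to read off the right adjoint $S$ from representability of the hom-functor on $Pt(\mathbf{B})$ and translate the representability condition into the existence of kernels of split epis.

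First I would unpack morphisms out of $T(X,B)$. A morphism in $Pt(\mathbf{B})$ from $T(X,B)=(X\sqcup B,[0\,1],\iota_2,B)$ to a split epi $(A,\alpha,\beta,B')$ is a pair $(f,g)$ with $f\colon X\sqcup B\to A$ and $g\colon B\to B'$ satisfying $\alpha f=g[0\,1]$ and $f\iota_2=\beta g$. Writing $f=[f_1,f_2]$ via the universal property of the coproduct, the second equation forces $f_2=\beta g$ (whence $\alpha f_2=g$ is automatic from $\alpha\beta=1$), while precomposing the first with $\iota_1$ gives $\alpha f_1=0$. Hence such morphisms are in natural bijection with pairs $(f_1,g)$ where $f_1\colon X\to A$ satisfies $\alpha f_1=0$ and $g\colon B\to B'$ is arbitrary.

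Suppose now that a right adjoint $S$ with $IS=I'$ exists. Then on objects $S(A,\alpha,\beta,B')=(K,B')$ for some $K$ depending functorially on the split epi, and the adjunction isomorphism combined with the identification above yields a natural bijection between $\{\,f_1\colon X\to A\mid \alpha f_1=0\,\}$ and $\mathrm{Hom}(X,K)$ in $X$ (the $\mathrm{Hom}(B,B')$ factor matches on both sides via the identity in $B$). By the Yoneda lemma, $K$ together with the image of $1_K$ under this bijection is the kernel of $\alpha$. Since $S$ is defined on every object of $Pt(\mathbf{B})$, every split epimorphism $\alpha$ admits a kernel, i.e., $\mathbf{B}$ has kernels of split epis.

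For the converse, assuming kernels of split epis exist, I would set $S(A,\alpha,\beta,B):=(\ker\alpha,B)$ on objects and, for a morphism $(f,g)$ of split epis, $S(f,g):=(f',g)$, where $f'$ is the unique morphism between the kernels induced by $\alpha' f k_{\alpha}=g\alpha k_{\alpha}=0$. Functoriality is routine, $IS=I'$ is immediate, and the natural bijection of the first paragraph promotes to the required adjunction. The only mildly nontrivial point, and thus the main obstacle, is the naturality of the bijection in the split-epi variable; this follows from the functoriality of the kernel construction on morphisms of split epis and is essentially bookkeeping.
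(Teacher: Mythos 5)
Your proof is correct and takes essentially the same route as the paper: your identification of morphisms out of $T(X,B)$ with pairs $(f_1,g)$ satisfying $\alpha f_1=0$ is exactly the paper's adjunction diagram, the universal element you extract via Yoneda is precisely the paper's counit component $\varepsilon_1$ (whose universal property the paper uses to show $\varepsilon_1=\ker\alpha$), and your construction of $S$ from chosen kernels in the converse direction matches the text verbatim. The only shared glibness is the tacit assumption that the $\mathrm{Hom}(B,B')$ factor is carried by the identity under the adjunction bijection, which the paper also assumes when it draws the counit with identity second component.
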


\begin{proof}
If the category has kernels of split epis, then for every split epi we
choose a specified kernel%
\begin{equation*}
\xymatrix{
X \ar[r]^{k}  & A \ar@<0.5ex>[r]^{\alpha} & B \ar@<0.5ex>[l]^{\beta}
}%
\end{equation*}%
and the functor $S$, sending $\left( A,\alpha ,\beta ,B\right) $ to the pair 
$\left( X,B\right) $ is the right adjoint for $T$:%
\begin{equation*}
\xymatrix{
(Y,D)
\ar[d]^{(f,g)}
&
Y
\ar[r]^{\iota_1}
\ar[d]_{f}
& Y \sqcup D
\ar@<0.5ex>[r]^{[0 \, 1]}
\ar[d]^{[kf \, \beta g]}
& B
\ar@<0.5ex>[l]^{\iota_2}
\ar[d]^{g}
\\
(X,B)
&
X \ar[r]^{k}  & A \ar@<0.5ex>[r]^{\alpha} & B \ar@<0.5ex>[l]^{\beta}
}%
.
\end{equation*}%
\newline
Now suppose $S$ is a right adjoint to $T$ and it is such that a split epi $%
\left( A,\alpha ,\beta ,B\right) $ goes to a pair of the form%
\begin{equation*}
\left( K[\alpha ],B\right)
\end{equation*}%
with unit and counit as follows%
\begin{equation*}
\xymatrix{
(X,B)
\ar[d]^{(\eta_X,1)}
&
K[\alpha]
\ar[r]^{\iota_1}
\ar[rd]_{\varepsilon_1}
& K[\alpha] \sqcup B
\ar@<0.5ex>[r]^{[0 \, 1]}
\ar[d]^{[\varepsilon_1 \, \beta]}
& B
\ar@<0.5ex>[l]^{\iota_2}
\ar[d]^{1}
\\
(K[0 \,1],B)
&
& A \ar@<0.5ex>[r]^{\alpha} & B \ar@<0.5ex>[l]^{\beta}
}%
.
\end{equation*}%
We have to prove that $\varepsilon _{1}=\ker \alpha $, and in fact, we have $%
\alpha \varepsilon _{1}=0$ and by the universal property of the counit we
have that given a morphism of split epis%
\begin{equation*}
\xymatrix{
X
\ar[r]^{\iota_1}
\ar[rd]_{f}
& X \sqcup B
\ar@<0.5ex>[r]^{[0 \, 1]}
\ar[d]^{[f \, \beta]}
& B
\ar@<0.5ex>[l]^{\iota_2}
\ar[d]^{1}
\\
& A \ar@<0.5ex>[r]^{\alpha} & B \ar@<0.5ex>[l]^{\beta}
}%
\end{equation*}%
that is a morphism $f:X\longrightarrow A$ such that $\alpha f=0$, there
exists a unique $\left( f^{\prime },1\right) :\left( X,B\right)
\longrightarrow \left( K[\alpha ],B\right) $ such that 
\begin{equation*}
\lbrack \varepsilon _{1}\ \beta ]\left( f^{\prime }\sqcup 1\right) =[f\
\beta ]
\end{equation*}%
which is equivalent to say $\varepsilon _{1}f^{\prime }=f$. Hence $%
\varepsilon _{1}$ is a kernel for $\alpha $.
\end{proof}

\begin{theorem}
Let $\mathbf{B}$ be a pointed category with binary coproducts and kernels of
split epis. If the canonical adjunction%
\begin{equation*}
\mathbf{B\times B}\overset{T}{\underset{S}{\underleftarrow{\overrightarrow{\
\ \ \ \ \perp \ \ \ \ \ \ }}}}Pt\left( \mathbf{B}\right)
\end{equation*}%
is an equivalence, then:%
\begin{equation*}
RG\left( \mathbf{B}\right) \sim Mor\left( \mathbf{B}\right)
\end{equation*}%
and%
\begin{equation*}
PC\left( \mathbf{B}\right) \sim 2\text{-}Chains\left( \mathbf{B}\right) .
\end{equation*}
\end{theorem}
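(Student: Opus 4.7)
The overall approach is to use the hypothesis that $T\dashv S$ is an equivalence, which implies every split epi in $\mathbf{B}$ is canonically isomorphic to a coproduct projection $[0\ 1]\colon X\sqcup B\to B$ with section $\iota_2$ and kernel $\iota_1$. For a reflexive graph $(C_1\rightrightarrows C_0)$ with common section $e$, I would apply this to the split epi $(d,e)$: setting $X=\ker d$, we get $C_1\cong X\sqcup C_0$ with $d=[0\ 1]$, $e=\iota_2$. The second structure map $c$ with $ce=1$ is then uniquely determined by $h:=c\iota_1\colon X\to C_0$, giving $c=[h\ 1]$; and conversely any $h\colon X\to C_0$ yields a reflexive graph. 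Functoriality on morphisms follows from the naturality of $T\dashv S$, so $RG(\mathbf{B})\sim Mor(\mathbf{B})$.

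For precategories I iterate the same idea at three nested levels. First, the bottom reflexive graph gives $C_1\cong X\sqcup C_0$ with $c=[h\ 1]$. Second, applying the equivalence to the top horizontal split epi $(\pi_2,e_2)$ gives $C_2\cong Y\sqcup C_1=Y\sqcup(X\sqcup C_0)$ with $\pi_2=[0\ 1]$, $e_2=\iota_2$, $Y=\ker\pi_2$. The remaining maps are then forced: from $d\pi_1=c\pi_2$ restricted to $\iota_1\colon Y\to C_2$ one gets $d\pi_1\iota_1=0$, so by the kernel property $\pi_1\iota_1$ factors as $\iota_1 a$ for a unique $a\colon Y\to X$; the split-square equations $\pi_1 e_2=ec$ and $e_1 e=e_2 e$ pin down the rest of $\pi_1$ and the $C_0$-component of $e_1$; $\pi_2 e_1=ed$ gives $e_1\iota_1=\iota_1 b$ for a unique $b\colon X\to Y$; and the composition axioms $dm=d\pi_2$ and $me_2=1$ force $m=[\iota_1 u,\,1]$ for a unique $u\colon Y\to X$. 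The remaining identities $\pi_1 e_1=1$, $me_1=1$, and $cm=c\pi_1$ then reduce precisely to $ab=1$, $ub=1$, and $ha=hu$, so a precategory is equivalent to the intermediate data $(Y\rightrightarrows X\xrightarrow{h}C_0)$ satisfying $ab=ub=1$ and $ha=hu$. Third, applying the equivalence once more to the split epi $(a,b)$ I get $Y\cong Z\sqcup X$ with $a=[0\ 1]$, $b=\iota_2$, $Z=\ker a$; writing $u=[t\ 1]$ for a unique $t\colon Z\to X$, the condition $ha=hu$ becomes $[0,h]=[ht,h]$, i.e.\ $ht=0$. This yields the 2-chain $Z\overset{t}{\longrightarrow}X\overset{h}{\longrightarrow}C_0$.

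The main obstacle is the middle stage: carefully disentangling the interplay between the two split epis of the split square, the composition map $m$, and all the required commutativity relations to show that the four morphisms $a,b,u,h$ really do capture the full precategory up to isomorphism. The kernel property of $\iota_1$ inside $X\sqcup B$ and the joint-epicness of the coproduct inclusions are used repeatedly to factor maps through kernels and to check equations componentwise. Functoriality on morphisms of precategories then follows from the naturality of each of the three decomposition steps, giving $PC(\mathbf{B})\sim 2\text{-}Chains(\mathbf{B})$.
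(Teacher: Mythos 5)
Your proof is correct and follows essentially the same route as the paper: identify the bottom split epi with a coproduct projection $[0\ 1]:X\sqcup B\to B$, decompose the split square, extract $m=[\iota_1 u\ 1]$ by factoring through the kernel $\iota_1$ of $[0\ 1]$, and finally decompose the split epi $(a,b)$ to obtain the 2-chain $Z\overset{t}{\to}X\overset{h}{\to}B$ with $ht=0$. The only inessential difference is that you derive the forms of $\pi_1$ and $e_1$ componentwise from the split-square equations, whereas the paper obtains the whole square at once as the image of a split epi under the equivalence induced on $Pt(\mathbf{B})$.
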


\begin{proof}
By the equivalence we have that a split epi%
\begin{equation*}
\xymatrix{
C_1
\ar@<0.5ex>[r]^{d}
& C_0
\ar@<0.5ex>[l]^{e}
}%
\ \ \ \ \ \ ,\ \ \ de=1
\end{equation*}%
is of the form%
\begin{equation*}
\xymatrix{
X \sqcup B
\ar@<0.5ex>[r]^{[0  \, 1]}
& B
\ar@<0.5ex>[l]^{\iota_2}
}%
,
\end{equation*}%
and to give a morphism $c:X\sqcup B\longrightarrow B$ such that $c\iota
_{2}=1$ is to give a morphism%
\begin{equation*}
h:X\longrightarrow B.
\end{equation*}%
So that a reflexive graph is, up to isomorphism, of the form%
\begin{equation*}
\xymatrix{
X \sqcup B
\ar@<1ex>[r]^{[0 \, 1]}
\ar@<-1ex>[r]_{[h \, 1]}
& B
\ar[l]|{\iota_2}
}%
.
\end{equation*}%
To investigate a precategory we observe that the square $\left( \ref{split
quare 1}\right) $ may be considered as a split epi in the category of split
epis, and hence, it is given up to an isomorphism in the form%
\begin{equation}
\xymatrix{
Y \sqcup (X \sqcup B)
\ar[r]^{[0 \, 1]}
\ar[d]_{a \sqcup [h,1]}
& X \sqcup B
\ar@<1ex>[l]^{\iota_2}
\ar[d]_{[h \, 1]}
\\
X \sqcup B
\ar[r]^{[0 \, 1]}
\ar@<-1ex>[u]_{b \sqcup \iota_2}
& B
\ar@<1ex>[l]^{\iota_2}
\ar@<-1ex>[u]_{\iota_2}
}%
\ \ \ ,\ \ \ ab=1.  \label{split square 2}
\end{equation}%
It follows that $m$, satisfying $m\iota _{2}=1$ is of the form%
\begin{equation*}
Y\sqcup \left( X\sqcup B\right) \overset{[v\ 1]}{\longrightarrow }\left(
X\sqcup B\right)
\end{equation*}%
and hence to give $m$ is to give $v:Y\longrightarrow X\sqcup B$.\newline
Since we also have $\left( \ref{dom_preserved in a precat}\right) $ then $%
[0\ 1]v=0$, and $v$ factors through the kernel of $[0\ 1]$ which is (see
Theorem \ref{Th 2})%
\begin{equation*}
\xymatrix{
X \ar[r]^{\iota_1}  & X \sqcup B \ar@<0.5ex>[r]^{[0 \, 1]} & B \ar@<0.5ex>[l]^{\iota_2}
}%
.
\end{equation*}%
This shows that to give $m$ is to give a morphism%
\begin{equation*}
u:Y\longrightarrow X
\end{equation*}%
and hence $m$ is given as%
\begin{equation*}
m=[\iota _{1}u\ 1]:Y\sqcup \left( X\sqcup B\right) \longrightarrow \left(
X\sqcup B\right) .
\end{equation*}%
Finally we have that condition $\left( \ref{cod_preserved in a precat}%
\right) $ is equivalent to $ha=hu$ and $m\left( b\sqcup \iota _{2}\right) $
is equivalent to $ub=1.$\newline
Conclusion 1: A precategory in $\mathbf{B}$ is completely determined by a
diagram%
\begin{equation*}
\xymatrix{
Y
\ar@<1ex>[r]^{a}
\ar@<-1ex>[r]_{u}
& X
\ar[l]|{b}
\ar[r]^{h}
& B
}%
\end{equation*}%
such that 
\begin{equation*}
ab=1=ub\ ,\ ha=hu.
\end{equation*}%
\newline
Continuing with a further analysis we observe that the resulting diagram is
in particular a reflexive graph and hence it is, up to isomorphism, of the
form%
\begin{equation*}
\xymatrix{
Z \sqcup X
\ar@<1ex>[r]^{[0 \,1]}
\ar@<-1ex>[r]_{[t \, 1]}
& X
\ar[l]|(.4){\iota_2}
\ar[r]^{h}
& B
}%
\end{equation*}%
where $h[0\ 1]=h[t\ 1]$ is equivalent to $ht=0$.\newline
Conclusion 2: A precategory in $\mathbf{B}$ is completely determined by a
2-chain complex%
\begin{equation*}
Y\overset{t}{\longrightarrow }X\overset{h}{\longrightarrow }B\ \ ,\ \ ht=0.
\end{equation*}
\end{proof}

\begin{remark}
In the future we will not assume the canonical functor $T$ to be an
equivalence, and hence the second conclusion will no longer be possible.
However, we will be interested in the study of precategories such that $%
\left( \ref{split quare 1}\right) $ is of the form $\left( \ref{split square
2}\right) $ and in that case, provided that $\iota _{1}$ is the kernel of $%
[0\ 1]$ we still can deduce conclusion 1. Such an example is the category of
pointed sets: see Introduction.
\end{remark}

There is a canonical inclusion of reflexive graphs into precategories, by
sending $h:X\longrightarrow B$ to 
\begin{equation*}
\xymatrix{
X
\ar@<1ex>[r]^{1}
\ar@<-1ex>[r]_{1}
& X
\ar[l]|{1}
\ar[r]^{h}
& B
}%
.
\end{equation*}

\begin{theorem}
If $\mathbf{B}$ has coequalizers of reflexive graphs, then the canonical
functor%
\begin{equation*}
PC\left( \mathbf{B}\right) \underset{V}{\longleftarrow }RG\left( \mathbf{B}%
\right)
\end{equation*}%
has a left adjoint.
\end{theorem}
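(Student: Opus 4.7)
My plan is to construct, for each precategory $P \in PC(\mathbf{B})$, a reflexive graph $LP$ together with a universal morphism $\eta_P : P \to V(LP)$ in $PC(\mathbf{B})$; by the standard reflection construction this produces the left adjoint $L$.

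First I would unpack what a morphism $(f_2,f_1,f_0) : P \to VG$ entails, where $G = (G_1 \rightrightarrows G_0)$ is a reflexive graph. Since $VG$ has $C_2 = C_1 = G_1$, $C_0 = G_0$ with all the level-$2$ structural maps $\pi_1, \pi_2, m, e_1, e_2$ equal to the identity, compatibility forces $f_2 = f_1 \pi_1 = f_1 \pi_2 = f_1 m$. Hence such a morphism is the same data as a morphism of underlying reflexive graphs $(f_0,f_1) : (C_1 \rightrightarrows C_0) \to G$ satisfying $f_1\pi_1 = f_1\pi_2 = f_1 m$. Applying the source and target maps of $G$ shows that $f_0$ must in turn coequalize the three maps $d\pi_2 = dm$, $d\pi_1 = c\pi_2$, and $c\pi_1 = cm$ from $C_2$ to $C_0$.

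Next I would construct $LP$ as the universal reflexive graph enforcing these identifications. The construction uses coequalizers of reflexive pairs in $\mathbf{B}$, which exist by hypothesis: at the arrow level the parallel pairs $(\pi_1, m)$ and $(\pi_2, m) : C_2 \rightrightarrows C_1$ are reflexive with common sections $e_1$ and $e_2$ respectively (since $\pi_i e_i = 1$ and $me_i = 1$); the corresponding pairs $C_2 \rightrightarrows C_0$ obtained by post-composing with $d$ or $c$ admit common sections of the form $e_i e$, because $de = 1 = ce$ and $\pi_2 e_1 = ec$, $\pi_1 e_2 = ec$ etc. Coordinating these coequalizers into a single coequalizer of reflexive graphs (noting that $RG(\mathbf{B})$ inherits coequalizers of reflexive pairs from $\mathbf{B}$), I obtain the reflexive graph $LP$ together with a canonical morphism $\eta_P : P \to V(LP)$.

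The main obstacle is that three parallel morphisms must be identified at the arrow level, not two, and they do not fit a single reflexive triple with a common section over either $C_0$ or $C_1$. So the construction must be performed in two stages, or equivalently assembled from two reflexive-pair coequalizers at the arrow level together with the matching quotients at the $C_0$-level. Once $LP$ is in hand, verifying the adjunction bijection $RG(\mathbf{B})(LP, G) \cong PC(\mathbf{B})(P, VG)$ is a direct consequence of the universal properties of the constituent coequalizers combined with the characterization of $PC(\mathbf{B})(P, VG)$ given in the first step, and naturality in $G$ is automatic.
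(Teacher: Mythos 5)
There is a genuine gap, and it occurs at the very first step: your description of $VG$ is not correct. If $VG$ had $C_2=C_1=G_1$ with $\pi_1=\pi_2=m=e_1=e_2=1$, the split-square axiom $d\pi_1=c\pi_2$ would force $d=c$, so this is not even an object of $PC(\mathbf{B})$ for a general reflexive graph. The paper's $V$ sends the reflexive graph corresponding to $h:X\to B$ to the precategory whose \emph{reduced} data is $Y=X$, $a=u=b=1_X$; in unreduced form this has $C_2=X\sqcup(X\sqcup B)\neq C_1=X\sqcup B$, with $\pi_2=[0\ 1]$, $\pi_1=1\sqcup[h\ 1]$, $m=[\iota_1\ 1]$. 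Because of this, your characterization of $PC(\mathbf{B})(P,VG)$ is wrong: the correct condition on a reflexive-graph morphism $(f_1,f_0)$ is \emph{not} that $f_1$ coequalizes $\pi_1,\pi_2,m$ on all of $C_2$, but only that it identifies the restrictions of $\pi_1$ and $m$ to the kernel part $Y$ of $C_2$ (in reduced terms, $g_1a=g_1u$, with no condition involving $\pi_2$ and none on $g_0$ alone). Imposing $f_1\pi_1=f_1\pi_2$ globally yields, on the summand $C_1\subseteq C_2$, the equation $f_1=f_1\iota_2[h\ 1]$, which in $Ab$ forces $f_1|_X=0$; so the universal object you would build is drastically over-collapsed, and your additional ``matching quotients at the $C_0$-level'' are likewise spurious --- the correct left adjoint leaves $C_0=B$ unchanged.

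This mis-identification also creates the three-map coequalizer problem that your last paragraph tries to work around, and the workaround itself does not go through: after coequalizing the reflexive pair $(\pi_1,m)$ by $q:C_1\to C_1'$, the remaining pair $(q\pi_2,qm):C_2\rightrightarrows C_1'$ has no common section from $C_1'$, so the hypothesis ``coequalizers of reflexive graphs'' does not apply to the second stage. The paper avoids all of this by first invoking the standing equivalence of the section to present a precategory as $Y\rightrightarrows X\to B$ with $ab=1=ub$, $ha=hu$, and a reflexive graph as $h:X\to B$; then a morphism into $V(h'')$ is exactly a pair $(g_1,g_0)$ with $g_1a=g_1u$ and $h''g_1=g_0h$, so the left adjoint is the coequalizer $\sigma:X\to X'$ of the \emph{single} reflexive pair $(a,u)$ (common section $b$), with $h=h'\sigma$ and unit $(\sigma a,\sigma,1_B)$. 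You would need to redo your first step with the correct $V$, after which only one reflexive-pair coequalizer is required and the difficulties you flag disappear.
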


\begin{proof}
The left adjoint is the following.\newline
Given the precategory%
\begin{equation*}
\xymatrix{
Y
\ar@<1ex>[r]^{a}
\ar@<-1ex>[r]_{u}
& X
\ar[l]|{b}
\ar[r]^{h}
\ar[rd]_{\sigma=coeq}
& B
\\
& & X'
\ar@{-->}[u]_{h'}
}%
\end{equation*}%
construct the coequalizer of $u$ and $a$, say $\sigma $, and consider the
reflexive graph in $\mathbf{B}$ determined by%
\begin{equation*}
h^{\prime }:X^{\prime }\longrightarrow B.
\end{equation*}%
This defines a reflection%
\begin{equation*}
PC\left( \mathbf{B}\right) \overset{U}{\longrightarrow }RG\left( \mathbf{B}%
\right)
\end{equation*}%
with unit%
\begin{equation*}
\xymatrix{
Y
\ar@<1ex>[r]^{a}
\ar@<-1ex>[r]_{u}
\ar[d]_{\sigma u= \sigma a}
& X
\ar[l]|{b}
\ar[r]^{h}
\ar[d]_{\sigma}
& B
\ar@{=}[d]
\\
X'
\ar@<1ex>[r]^{1}
\ar@<-1ex>[r]_{1}
& X'
\ar[l]|{1}
\ar[r]^{h'}
& B
}%
.
\end{equation*}
\end{proof}

Next we characterize a category $\mathbf{B}$, pointed, with binary
coproducts and such that the canonical functor%
\begin{equation*}
\mathbf{B\times B}\longrightarrow Pt\left( \mathbf{B}\right)
\end{equation*}%
is an equivalence.

First observe that:

\begin{theorem}
If $\mathbf{B}$, as above, also has binary products, then it is an additive
category (with kernels of split epis).
\end{theorem}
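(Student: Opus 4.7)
The plan is to obtain additivity in two stages: first to upgrade the existing products and coproducts to biproducts (so that the hom-sets carry a canonical commutative monoid structure), and then to re-apply the hypothesis that $T$ is an equivalence, this time to the codiagonal split epimorphism, in order to manufacture additive inverses.

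First, since $T$ is an equivalence, every split epimorphism $(A,\alpha,\beta,B)$ has the canonical comparison $[\ker \alpha,\beta]\colon (\ker\alpha)\sqcup B\to A$ as its structural isomorphism. I would apply this to the product projection $\pi_2\colon X\times B\to B$ with section $\langle 0,1\rangle$ and kernel $\langle 1,0\rangle$, concluding that the canonical comparison $X\sqcup B\to X\times B$ is an isomorphism; hence binary products and binary coproducts agree and $\mathbf{B}$ has binary biproducts. From the biproduct structure one recovers in the usual way a commutative monoid enrichment $f+g=[1,1]\langle f,g\rangle$ on every hom-set, with unit given by the pointed-category zero.

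Second, for each object $X$ I would apply the hypothesis to the codiagonal split epimorphism $[1_X,1_X]\colon X\oplus X\to X$ with section $\iota_1$. Its kernel $k=\langle k_1,k_2\rangle\colon K\to X\oplus X$ exists by assumption and satisfies $k_1+k_2=0$ in this monoid structure. Because $T$ is an equivalence, the comparison map $[k,\iota_1]\colon K\oplus X\to X\oplus X$ is an isomorphism. Writing it as the matrix
\[
\begin{pmatrix} k_1 & 1_X \\ k_2 & 0 \end{pmatrix}
\]
and inverting entry-wise, I expect to find that $k_2\colon K\to X$ must itself be an isomorphism. Setting $\sigma:=k_1k_2^{-1}\colon X\to X$ then gives $1_X+\sigma=(k_2+k_1)k_2^{-1}=0$, and for any morphism $f\colon Y\to X$ the morphism $\sigma f$ is an additive inverse of $f$, so each hom-set is an abelian group and $\mathbf{B}$ is additive.

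The main obstacle I anticipate is the derivation that $k_2$ is an isomorphism purely from the invertibility of $[k,\iota_1]$ together with the monoid relation $k_1+k_2=0$: one must carry out matrix inversion using only the commutative monoid structure (no subtraction is yet available) and verify that the candidate $b\colon X\to K$ satisfying $bk_2=1_K$ also satisfies $k_2 b=1_X$. Once this is done, the remaining verifications—naturality of $\sigma$ in $X$ and bilinearity of composition over the resulting group structure—are routine.
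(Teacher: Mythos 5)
Your proof is correct, and its first stage coincides exactly with the paper's: applying the equivalence to $(X\times B,\pi_2,\langle 0,1\rangle,B)$, whose kernel is $\langle 1,0\rangle$, makes the canonical comparison $X\sqcup B\to X\times B$ invertible, yielding biproducts and the commutative-monoid enrichment. Where you genuinely diverge is in manufacturing inverses. The paper's second diagram applies the equivalence to the \emph{other} splitting of the same projection, $(B\times B,\pi_2,\langle 1,1\rangle,B)$: since the kernel of $\pi_2$ is still $\langle 1,0\rangle$, the comparison is the explicit matrix $\left(\begin{smallmatrix}1&1\\0&1\end{smallmatrix}\right)$, and the $(1,2)$-entry of its inverse is immediately a negative for $1_B$ --- no auxiliary kernel object, no intermediate lemma. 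You instead use the codiagonal $[1\ 1]\colon X\oplus X\to X$ with section $\iota_1$, whose kernel $K$ is not known in advance, and so you must first prove $k_2$ invertible. That extra step does go through, and more easily than you fear: writing $N=\left(\begin{smallmatrix}a&b\\c&d\end{smallmatrix}\right)$ for the inverse of $[k\ \iota_1]=\left(\begin{smallmatrix}k_1&1\\k_2&0\end{smallmatrix}\right)$, the $(1,2)$-entry of $N[k\ \iota_1]=1$ reads $a=0$ outright (no subtraction required), whence the $(1,1)$-entry gives $bk_2=1_K$ and the $(2,2)$-entry of $[k\ \iota_1]N=1$ gives $k_2b=1_X$. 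Then $\sigma=k_1b$ satisfies $1_X+\sigma=(k_2+k_1)b=0b=0$ as you say, and the naturality of $\sigma$ you list as a remaining verification is not needed: once every $\hom(Y,X)$ is a group, additivity follows. So your route is valid; it simply trades the paper's one-line reading-off of $-1_B$ from a unipotent matrix for an extra invertibility lemma about the kernel of the codiagonal.
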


\begin{proof}
We simply observe that in particular%
\begin{equation*}
\xymatrix{
X \sqcup B
\ar@<0.5ex>[r]^{[0 \, 1]}
\ar[d]_{\cong }
& B
\ar@<0.5ex>[l]^{\iota_2}
\ar@{=}[d]
\\
X \times B
\ar@<0.5ex>[r]^{\pi_2}
& B
\ar@<0.5ex>[l]^{<0,1>}
}%
\ \ \ \ \ \text{and \ \ \ \ \ \ \ }%
\xymatrix{
X \sqcup B
\ar@<0.5ex>[r]^{[0 \, 1]}
\ar[d]_{\cong }
& B
\ar@<0.5ex>[l]^{\iota_2}
\ar@{=}[d]
\\
X \times B
\ar@<0.5ex>[r]^{\pi_2}
& B
\ar@<0.5ex>[l]^{<1,1>}
}%
,
\end{equation*}%
since $X\overset{<1,0>}{\longrightarrow }X\times B$ is a kernel for $\pi
_{2} $. See \cite{GJ2} for more details.
\end{proof}

\begin{theorem}
\label{Th 2}Let $\mathbf{B}$ be pointed with binary coproducts and kernels
of split epis. The following conditions are equivalent:

\begin{description}
\item[(a)] the canonical adjunction%
\begin{equation*}
\mathbf{B\times B}\overset{T}{\underset{S}{\underleftarrow{\overrightarrow{\
\ \ \ \ \perp \ \ \ \ \ \ }}}}Pt\left( \mathbf{B}\right)
\end{equation*}%
\begin{eqnarray*}
T\left( X,B\right) &=&\left( X\sqcup B,[0\ 1],\iota _{2},B\right) \\
S\left( A,\alpha ,\beta ,B\right) &=&\left( K[\alpha ],B\right) ,
\end{eqnarray*}%
is an equivalence of categories;

\item[(b)] the category $\mathbf{B}$ satisfies the following two axioms:

\begin{description}
\item[(A1)] for every diagram of the form%
\begin{equation}
\xymatrix{
X \ar[r]^{\iota_1}  & X \sqcup B \ar@<0.5ex>[r]^{[0 \, 1]} & B \ar@<0.5ex>[l]^{\iota_2}
}
\label{A1}
\end{equation}%
the morphism $\iota _{1}$ is the kernel of $[0\ 1]$;

\item[(A2)] the split short five lemma holds, that is, given any diagram of
split epis and respective kernels%
\begin{equation}
\xymatrix{
X
\ar[r]^{k}
\ar[d]_{f}
 & A \ar@<0.5ex>[r]^{\alpha}
\ar[d]_{h}
& B \ar@<0.5ex>[l]^{\beta}
\ar[d]^{g}
\\
X' \ar[r]^{k'}  & A' \ar@<0.5ex>[r]^{\alpha'} & B' \ar@<0.5ex>[l]^{\beta'}
}
\label{A2}
\end{equation}%
if $g$ and $f$ are isomorphisms then $h$ is an isomorphism.
\end{description}
\end{description}
\end{theorem}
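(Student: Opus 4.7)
The plan is to prove the two implications (a)$\Rightarrow$(b) and (b)$\Rightarrow$(a) by analyzing when the unit and counit of the canonical adjunction $T \dashv S$ (already constructed in the previous theorem) are natural isomorphisms.

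First I would set up the explicit formulas for the unit and counit. From the proof of the adjunction theorem above, the unit at $(X,B)$ is $\eta_{(X,B)} = (\eta_X, 1_B)\colon (X,B) \to (K[0\,1], B) = ST(X,B)$, where $\eta_X\colon X \to K[0\,1]$ is the unique factorization of $\iota_1\colon X \to X \sqcup B$ through the chosen kernel of $[0\,1]$. The counit at $(A,\alpha,\beta,B)$ is the morphism of split epis with components $(1_{K[\alpha]}, [k\ \beta], 1_B)$, where $k = \varepsilon_1\colon K[\alpha] \to A$ is the chosen kernel of $\alpha$. An adjunction is an equivalence iff both unit and counit are natural isomorphisms, so the whole theorem reduces to characterizing those two conditions.

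For (b)$\Rightarrow$(a): axiom (A1) says $\iota_1$ already \emph{is} a kernel of $[0\,1]$, hence $\eta_X$ is an isomorphism and therefore the unit is a natural isomorphism. For the counit, consider the diagram
\begin{equation*}
\xymatrix{K[\alpha] \ar[r]^{\iota_1} \ar@{=}[d] & K[\alpha] \sqcup B \ar@<0.5ex>[r]^{[0\,1]} \ar[d]^{[k\ \beta]} & B \ar@<0.5ex>[l]^{\iota_2} \ar@{=}[d] \\ K[\alpha] \ar[r]^{k} & A \ar@<0.5ex>[r]^{\alpha} & B \ar@<0.5ex>[l]^{\beta}}
\end{equation*}
The top row is a kernel-split-epi sequence by (A1), the bottom is one by construction, the squares commute (by direct check: $\alpha[k\ \beta] = [0\ 1]$ and $[k\ \beta]\iota_2 = \beta$, plus $[k\ \beta]\iota_1 = k$ on the left), and the outer vertical maps are identities. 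Axiom (A2) then forces $[k\ \beta]$ to be an isomorphism, so the counit is a natural isomorphism.

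For (a)$\Rightarrow$(b): assume $T \dashv S$ is an equivalence. The counit applied to $T(X,B) = (X\sqcup B, [0\,1], \iota_2, B)$ is precisely the canonical comparison $[\iota_1\ \iota_2] = 1_{X\sqcup B}$ when one identifies the kernel via $\eta_X$; equivalently, since both unit and counit are isomorphisms, $\eta_X\colon X \to K[0\,1]$ is an isomorphism, and its composite with the chosen kernel equals $\iota_1$, so $\iota_1$ itself is a kernel of $[0\,1]$, giving (A1). For (A2), take a diagram as in (\ref{A2}) with $f$ and $g$ isomorphisms. The pair $(f,g)$ is exactly $S$ applied to the morphism of split epis $(h,g)$ (up to the chosen kernels), so applying $T$ and using naturality of the counit yields the commutative rectangle
\begin{equation*}
\xymatrix{X \sqcup B \ar[r]^{f \sqcup g} \ar[d]_{[k\ \beta]} & X' \sqcup B' \ar[d]^{[k'\ \beta']} \\ A \ar[r]^{h} & A'}
\end{equation*}
Both vertical maps are components of the counit, hence isomorphisms, and $f \sqcup g$ is an isomorphism since $f,g$ are; therefore $h$ is an isomorphism, giving (A2).

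The main obstacle is the bookkeeping in (a)$\Rightarrow$(b)(A1): one must be careful that the chosen kernel used to define $S$ need not coincide with $\iota_1$ on the nose, so the conclusion is that $\iota_1$ is \emph{a} kernel (equivalently, a monomorphism equalizing $[0\,1]$ with the zero morphism and satisfying the universal property), which follows because $\eta_X$ is an iso and kernels are preserved under isomorphism. Once this identification is handled cleanly, the rest of the argument is a direct translation between ``unit/counit is iso'' and the two axioms.
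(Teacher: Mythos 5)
Your proposal is correct and follows essentially the same route as the paper: it reduces the equivalence to the unit and counit being isomorphisms, derives the unit's invertibility from (A1), obtains the counit's invertibility by applying (A2) to the comparison $[k\ \beta]$ over identities, and conversely extracts (A1) from $ST\cong 1$ and (A2) from the commutative rectangle $[k'\ \beta'](f\sqcup g)=h[k\ \beta]$, exactly as in the paper's formula $h^{-1}=[k\ \beta](f^{-1}\sqcup g^{-1})[k'\ \beta']^{-1}$. No substantive differences.
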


\begin{proof}
$(b)\Rightarrow (a)$ Using only (A1) we have that $ST\cong 1$, and using
(A1) and (A2) we have, in particular, that $[k\ \beta ]$ as in%
\begin{equation*}
\xymatrix{
X
\ar[r]^{\iota_1}
\ar@{=}[d]
 & X \sqcup B
\ar@<0.5ex>[r]^{[0 \, 1]}
\ar[d]_{[k \, \beta]}
& B
\ar@<0.5ex>[l]^{\iota_2}
\ar@{=}[d]
\\
X \ar[r]^{k}  & A \ar@<0.5ex>[r]^{\alpha} & B \ar@<0.5ex>[l]^{\beta}
}%
\end{equation*}%
is an isomorphism, and hence $TS\cong 1$.

$(a)\Rightarrow (b)$ Suppose $ST\cong 1$, this gives (A1); suppose $TS\cong
1 $, so that from $\left( \ref{A2}\right) $ we can form%
\begin{equation*}
\xymatrix{
X
\ar[r]^{\iota_1}
\ar[d]_{f}
 & X \sqcup B
\ar@<0.5ex>[r]^{[0 \, 1]}
\ar[d]_{f \sqcup g}
& B
\ar@<0.5ex>[l]^{\iota_2}
\ar[d]^{g}
\\
X' \ar[r]^{\iota_1}  & X \sqcup B \ar@<0.5ex>[r]^{[0 \, 1]} & B \ar@<0.5ex>[l]^{\iota_2}
}%
\end{equation*}%
and if $f$, $g$ are isomorphisms, we can find $h^{-1}=[k\ \beta ]\left(
f^{-1}\sqcup g^{-1}\right) [k^{\prime }\ \beta ^{\prime }]^{-1}$.
\end{proof}

\begin{corollary}
If $T$ is a reflection then it is an equivalence of categories.
\end{corollary}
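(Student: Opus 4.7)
The plan is to apply Theorem \ref{Th 2}: since it characterizes equivalences of $T \dashv S$ as precisely those situations where axioms (A1) and (A2) both hold, the task reduces to verifying these two axioms whenever $T$ admits the structure of a reflection.

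A reflection, per Definition 1 applied to $T$, supplies a right adjoint $H$ with $TH = 1_{Pt(\mathbf{B})}$ and a unit $\rho: 1_{\mathbf{B}\times\mathbf{B}} \to HT$ satisfying $T \circ \rho = 1_T$ and $\rho \circ H = 1_H$. By uniqueness of right adjoints, $H$ is canonically identified with the $S$ of the previous theorem, and in particular the counit of $T \dashv S$ is invertible; concretely, for every split epi $(A,\alpha,\beta,B)$ the canonical comparison $[k,\beta]: K[\alpha] \sqcup B \to A$ is an isomorphism. This last observation is already the ingredient needed to copy the $(a)\Rightarrow (b)$ step in the proof of Theorem \ref{Th 2}: given any square as in (A2) with $f,g$ isomorphisms, the explicit formula $h^{-1} = [k,\beta]\,(f^{-1}\sqcup g^{-1})\,[k',\beta']^{-1}$ exhibits the inverse of $h$, so (A2) holds.

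For (A1), I evaluate the triangle identity $T\circ\rho = 1_T$ at an arbitrary $(X,B)$. The component $\rho_{(X,B)} = (\bar{\iota_1},1_B)$ is built from the unique factorization $\bar{\iota_1}: X \to K[0,1]$ of $\iota_1$ through the kernel of $[0,1]$; applying $T$ yields $\bar{\iota_1}\sqcup 1_B$, which the triangle identity forces to be $1_{T(X,B)}$. Precomposing with $\iota_1$ and $\iota_2$ and appealing to the universal property of the coproduct pins down $\bar{\iota_1}$ as the identity on $X$ (once $K[0,1]$ is identified with $X$, as is forced by the equality of objects $K[0,1]\sqcup B = X\sqcup B$). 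Hence $\iota_1$ itself is the kernel of $[0,1]$, which is (A1), and Theorem \ref{Th 2} finishes the proof.

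The principal subtlety lies in reconciling the strict identities of Definition 1 with the up-to-iso content of Theorem \ref{Th 2}: one needs to verify that the right adjoint $H$ produced by the reflection can be aligned with the specific $S$ built from chosen kernels, so that the strict equation $T\circ\rho = 1_T$ really does translate to $\bar{\iota_1}\sqcup 1_B = 1$; reading everything only up to natural isomorphism, the same argument still delivers $\bar{\iota_1}$ invertible, which is enough to conclude via Theorem \ref{Th 2}.
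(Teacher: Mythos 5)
Your overall strategy is the natural one for this corollary (which the paper states without proof): reduce to Theorem~\ref{Th 2} by checking (A1) and (A2). Your treatment of (A2) is correct. A reflection structure on $T$ makes its right adjoint isomorphic to $S$ with invertible counit, so every comparison $[k\ \beta]\colon K[\alpha]\sqcup B\to A$ is an isomorphism, and the formula $h^{-1}=[k\ \beta](f^{-1}\sqcup g^{-1})[k'\ \beta']^{-1}$ from the proof of Theorem~\ref{Th 2} then gives the split short five lemma.

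The gap is in (A1). What the triangle identity actually yields at $(X,B)$ is $[k\ \iota_2]\circ(\bar{\iota}_1\sqcup 1_B)=1_{X\sqcup B}$, hence (counit invertible) that $\bar{\iota}_1\sqcup 1_B$ is an isomorphism. Your step from there to $\bar{\iota}_1$ being an isomorphism does not work as written: the claim that $K[0\ 1]$ ``is identified with $X$, as is forced by the equality of objects $K[0\ 1]\sqcup B=X\sqcup B$'' is a non sequitur, since a coincidence of coproducts does not identify the summands, and precomposing $\bar{\iota}_1\sqcup 1_B$ with the coprojections only returns the defining equation $k\bar{\iota}_1=\iota_1$, which carries no new information. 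Your fallback remark that ``up to isomorphism the same argument still delivers $\bar{\iota}_1$ invertible'' is exactly the unproved assertion that $f\sqcup 1_B$ invertible implies $f$ invertible, which fails in a general category; this is the whole content of (A1) and it is precisely where pointedness must enter. A correct completion: $\bar{\iota}_1$ always has the left inverse $[1_X\ 0_{B,X}]\,k$, because $[1_X\ 0]\,k\,\bar{\iota}_1=[1_X\ 0]\,\iota_1=1_X$; and from the commuting square $[1_{K}\ 0_{B,K}]\circ(\bar{\iota}_1\sqcup 1_B)=\bar{\iota}_1\circ[1_X\ 0_{B,X}]$ together with the invertibility of $\bar{\iota}_1\sqcup 1_B$ one gets the right inverse $[1_X\ 0]\,(\bar{\iota}_1\sqcup 1_B)^{-1}\,\iota_1^{K}$, so $\bar{\iota}_1$ is an isomorphism and $\iota_1$ is a kernel of $[0\ 1]$. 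Without some argument of this kind your verification of (A1) is incomplete.
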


We may now state the following results.

\begin{conclusion}
Let $\mathbf{B}$ be a pointed category with binary coproducts. TFAE:\newline
(a) T is a reflection and $\mathbf{B}$ has binary products;\newline
(b) $\mathbf{B}$ is additive and has kernels of split epis.
\end{conclusion}

\begin{conclusion}
Let $\mathbf{B}$ be pointed, with binary products and coproducts and kernels
of split epis. TFAE:\newline
(a) $T$ is a reflection;\newline
(b) $\mathbf{B}$ is additive.
\end{conclusion}

\subsection{Restriction to split epis}

Suppose now that the canonical functor $T$ is not an equivalence, but we
still have axiom $\left( \ref{A1}\right) $, that is $ST\cong 1$. The results
relating precategories and reflexive graphs will still hold if we restrict $%
PC\left( \mathbf{B}\right) $ to diagrams of the form%
\begin{equation*}
\xymatrix@C=3pc{
Y \sqcup (X \sqcup B)
\ar@<2ex>[r]^{[0 \, 1]}
\ar@<-2ex>[r]_{a \sqcup c}
\ar[r]|{m}
& X \sqcup B
\ar@<-1ex>[l]|{\iota_2}
\ar@<1ex>[l]|{b \sqcup c}
\ar@<1ex>[r]^{[0 \, 1]}
\ar@<-1ex>[r]_{c}
& B
\ar[l]|{\iota_2}
}%
.
\end{equation*}%
This result will be proved in a more general case in the next sections.

An example of such a case is the category of pointed sets.

If starting with a general half-reflection%
\begin{equation*}
\mathbf{A}\overset{I}{\underset{G}{\underleftarrow{\overrightarrow{\ \ \ \ \
\ \ \ \ \ \ }}}}\mathbf{B\ \ \ ,\ \ \ }\pi :1\longrightarrow GI
\end{equation*}%
such that $G$ admits a left adjoint%
\begin{equation*}
\left( F,G,\eta ,\varepsilon \right)
\end{equation*}%
we consider the canonical functor%
\begin{equation*}
\mathbf{A}\overset{T}{\longrightarrow }Pt\left( \mathbf{B}\right)
\end{equation*}%
and ask if it is an equivalence; if not we then ask if it satisfies at least
one of the axioms $\left( \ref{A1}\right) $ or $\left( \ref{A2}\right) $.
For example for $\mathbf{A=B\times B}$ and assuming the constructions as
above, in the case of pointed sets we have $\left( \ref{A1}\right) $ but not 
$\left( \ref{A2}\right) $, while in groups we have $\left( \ref{A2}\right) $
but not $\left( \ref{A1}\right) $.

In the case where we have only $\left( \ref{A1}\right) $ we will be
interested in the study of $RG\left( \mathbf{B}\right) $ and $PC\left( 
\mathbf{B}\right) $ restricted to split epis of the form%
\begin{equation*}
\xymatrix{
FA
\ar@<0.5ex>[r]^{\varepsilon_{IA} F(\pi_A)}
& IA
\ar@<0.5ex>[l]^{I(\eta_A)}
}%
,
\end{equation*}%
while if in the presence of $\left( \ref{A2}\right) $, but not $\left( \ref%
{A1}\right) $, we may construct a category of internal actions as suggested
in \cite{GJ1}.

\section{Semi-Additivity}

Let $\mathbf{B}$ be a pointed category with binary coproducts and kernels of
split epis. As shown in the previous section there is a canonical adjunction%
\begin{equation}
\mathbf{B\times B}\overset{T}{\underset{S}{\underleftarrow{\overrightarrow{\
\ \ \ \ \ \perp \ \ \ \ \ }}}}Pt\left( \mathbf{B}\right) .
\label{canonical adjunction BxB--->Pt(B)}
\end{equation}%
We are considering $\mathbf{B\times B}$ and $Pt\left( \mathbf{B}\right) $ as
objects in the category of functors over $\mathbf{B}$, that is%
\begin{equation*}
\xymatrix{
\mathbf{B\times B}
\ar[rd]_{I}
& & Pt(\mathbf{B})
\ar[ld]^{I'}
\\
& \mathbf{B}
}%
\end{equation*}%
where $I\left( X,B\right) =B$ and $I^{\prime }\left( A,\alpha ,\beta
,B\right) =B$.

We are also interested in the fact that $I$ is a half-reflection, with
respect to some functor $G$. In the case of $\mathbf{B\times B}$ there is a
canonical choice for $G$, namely the diagonal functor, and it is a
half-reflection if and only if $\mathbf{B}$ is pointed. We are also
interested in the fact that $G$ admits a left adjoint.

In the case of $Pt\left( \mathbf{B}\right) $ there are apparently many good
choices for the functor $G^{\prime }$ to be a half-reflection together with $%
I^{\prime }$. Nevertheless, if we ask that the left adjoint for $G^{\prime }$
to be $F^{\prime }$, such that $F^{\prime }\left( A,\alpha ,\beta ,B\right)
=A$, then we calculate $G^{\prime }$ as follows.

\begin{theorem}
\label{Theorem 1}Let $\mathbf{B}$ be any category and consider the two
functors%
\begin{equation*}
Pt\left( \mathbf{B}\right) \overset{I}{\underset{F}{\underrightarrow{%
\overrightarrow{\ \ \ \ \ \ \ \ \ \ \ }}}}\mathbf{B}
\end{equation*}%
\begin{eqnarray*}
I\left( A,\alpha ,\beta ,B\right) &=&B \\
F\left( A,\alpha ,\beta ,B\right) &=&A.
\end{eqnarray*}%
The functor $F$ admits a right adjoint%
\begin{equation*}
\left( F,G,\eta ,\varepsilon \right)
\end{equation*}%
such that $IG=1_{\mathbf{B}}$ if and only if the category $\mathbf{B}$ has
an endofunctor%
\begin{equation*}
G_{1}:\mathbf{B}\longrightarrow \mathbf{B}
\end{equation*}%
and natural transformations%
\begin{equation*}
\xymatrix{
G_1(B)
\ar@<1ex>[r]^{\pi_B}
\ar@<-1ex>[r]_{\varepsilon_B}
& B
\ar[l]|(.4){\delta_B}
}%
\ \ \ \ ,\ \ \ \ \ \pi _{B}\delta _{B}=1_{B},
\end{equation*}%
satisfying the following property:\newline
for every diagram in $\mathbf{B}$ of the form%
\begin{equation*}
\xymatrix{
A
\ar@<0.5ex>[r]^{\alpha}
\ar[rd]_{f}
& B'
\ar@<0.5ex>[l]^{\beta}
\\
& B
}%
\ \ \ ,\ \ \ \alpha \beta =1
\end{equation*}%
there exists a unique morphism%
\begin{equation*}
f^{\prime }:A\longrightarrow G_{1}\left( B\right)
\end{equation*}%
such that%
\begin{eqnarray*}
\varepsilon _{B}f^{\prime } &=&f \\
\delta _{B}\pi _{B}f^{\prime } &=&f^{\prime }\beta \alpha.
\end{eqnarray*}
\end{theorem}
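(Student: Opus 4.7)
The plan is to recognise the stated universal property as precisely the hom-set bijection defining an adjunction $F \dashv G$, once one prescribes $G(B) := (G_1(B), \pi_B, \delta_B, B)$. The whole theorem then amounts to translating between the adjunction on one side and the pointwise data $(G_1, \pi, \delta, \varepsilon)$ on the other, with the single universal equation $\delta_B \pi_B f' = f'\beta\alpha$ encoding the pair of $\mathrm{Pt}(\mathbf{B})$-morphism squares.

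For the direction $(\Rightarrow)$, I would start by noting that the hypothesis $IG = 1_{\mathbf{B}}$ forces $G(B)$ to be a split epimorphism over $B$, so one may write $G(B) = (G_1(B), \pi_B, \delta_B, B)$ with $\pi_B\delta_B = 1_B$, automatically producing the endofunctor $G_1 := FG$ and natural transformations $\pi$, $\delta$ (their naturality is simply the functoriality of $G$ into $\mathrm{Pt}(\mathbf{B})$). Take $\varepsilon$ to be the counit of $F \dashv G$. Given $X = (A,\alpha,\beta,B')$ and $f : A \to B$, apply the adjunction bijection to obtain a unique $\mathrm{Pt}(\mathbf{B})$-morphism $(f',g) : X \to G(B)$. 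The triangle identity gives $\varepsilon_B f' = f$, while the two squares defining a $\mathrm{Pt}$-morphism, $\pi_B f' = g\alpha$ and $f'\beta = \delta_B g$, combine (using $\alpha\beta = 1$ to eliminate $g = \pi_B f'\beta$) to $\delta_B\pi_B f' = f'\beta\alpha$. Uniqueness of $f'$ is uniqueness of the adjoint transpose.

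For $(\Leftarrow)$, I would use the given data to define $G(B) := (G_1(B), \pi_B, \delta_B, B)$; naturality of $\pi,\delta$ makes each $(G_1(g), g)$ into a $\mathrm{Pt}$-morphism, so $G : \mathbf{B} \to \mathrm{Pt}(\mathbf{B})$ is well defined and manifestly satisfies $IG = 1_{\mathbf{B}}$. To exhibit $F \dashv G$, send $f : F(X) = A \to B$ to the pair $(f', \pi_B f'\beta)$, where $f'$ is supplied by the universal property; a short calculation using $\pi_B\delta_B = 1_B$ and $\delta_B\pi_B f' = f'\beta\alpha$ shows that this pair satisfies both $\mathrm{Pt}$-morphism squares. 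Conversely, send $(h,g) : X \to G(B)$ to $\varepsilon_B h$, and observe that $h$ itself fulfils the universal property's two conditions (in particular $\delta_B\pi_B h = \delta_B g\alpha = h\beta\alpha$), so uniqueness forces $h = (\varepsilon_B h)'$. Mutual inverseness then follows, and naturality in both variables is a direct diagram chase from the naturality of $\varepsilon$, $\pi$ and $\delta$.

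The main obstacle I anticipate is the bookkeeping required to verify that the single equation $\delta_B\pi_B f' = f'\beta\alpha$ in the universal property genuinely encodes the two commutativities of a $\mathrm{Pt}(\mathbf{B})$-morphism once $g$ has been expressed as $\pi_B f'\beta$. This compression relies essentially on both $\pi_B\delta_B = 1_B$ and $\alpha\beta = 1$, and making sure that the implications run in both directions (rather than only one) is the delicate point of the argument.
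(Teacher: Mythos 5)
Your proposal is correct and follows essentially the same route as the paper: both identify $G(B)$ with the split epi $(G_1(B),\pi_B,\delta_B,B)$, observe that a $\mathrm{Pt}(\mathbf{B})$-morphism into $G(B)$ is determined by its first component $f'$ subject to the single equation $\delta_B\pi_B f'=f'\beta\alpha$ (with $g=\pi_B f'\beta$ recovered from it), and then read the stated universal property as the universality of the counit $\varepsilon_B$, which the paper handles by citing MacLane's universal-arrow characterization of adjunctions while you spell out the hom-set bijection directly.
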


\begin{proof}
Suppose we have $G_{1},\pi ,\delta ,\varepsilon $ satisfying the required
conditions in the Theorem, then the functor%
\begin{equation*}
G(B)=%
\xymatrix{
G_1(B)
\ar@<0.5ex>[r]^{\pi_B}
&  B
\ar@<0.5ex>[l]^{\delta_B}
}%
\end{equation*}%
is a right adjoint to $F$; in fact (see \cite{ML}, p.83, Theorem 2, (iii))
we have functors $F$ and $G$, and a natural transformation $\varepsilon : FG
\longrightarrow 1$, such that each $\varepsilon_B : FG(B) \longrightarrow B$
is universal from $F$ to $B$:%
\begin{equation*}
\xymatrix{
A
\ar[d]^{f}
\ar@{}[rd]|{:}
&
A
\ar@<0.5ex>[r]^{\alpha}
\ar[d]^{f_1}
\ar@{-->}[rd]|{f}
& B
\ar@<0.5ex>[l]^{\beta}
\ar[d]^{f_0}
\\
B'
&
G_1(B') \ar@<0.5ex>[r]^{\pi_B} & B' \ar@<0.5ex>[l]^{\delta_B}
}%
\end{equation*}%
given $f$, there is a unique $f_{1}$ (with $\varepsilon _{B}f^{\prime } = f
, \, \delta _{B}\pi _{B}f^{\prime } = f^{\prime }\beta \alpha$) and $f_{0}$
follows as $f_{0}=\pi _{B}f_{1}\beta $; conversely, given $f_{1}$, we find $%
f=\varepsilon _{B}f_{1}$.

Now, given an adjunction%
\begin{equation*}
\left( F,G,\eta ,\varepsilon \right)
\end{equation*}%
such that $IG=1$, if writing%
\begin{equation*}
G(B)=%
\xymatrix{
G_1(B)
\ar@<0.5ex>[r]^{G_2(B)}
&  B
\ar@<0.5ex>[l]^{G_3(B)}
}%
\end{equation*}%
we define%
\begin{equation*}
G_{1}=FG\ \ ,\ \ \pi _{B}=G_{2}\left( B\right) \ \ ,\ \ \delta
_{B}=G_{3}\left( B\right)
\end{equation*}%
and%
\begin{equation*}
\varepsilon _{B}:FG\left( B\right) \longrightarrow B
\end{equation*}%
is the counit of the adjunction.

Clearly we have natural transformations with $\pi _{B}\delta _{B}=1$.

It remains to check the stated property - but it is simply the universal
property of $\varepsilon_B$: given a diagram%
\begin{equation*}
\xymatrix{
A
\ar@<0.5ex>[r]^{\alpha}
\ar[rd]_{f}
& B'
\ar@<0.5ex>[l]^{\beta}
\\
& B
}%
\ \ \ ,\ \ \ \alpha \beta =1
\end{equation*}%
there is a unique morphism of split epis%
\begin{equation*}
\xymatrix{
A
\ar@<0.5ex>[r]^{\alpha}
\ar[d]^{f_1}
& B
\ar@<0.5ex>[l]^{\beta}
\ar[d]^{f_0}
\\
G_1(B') \ar@<0.5ex>[r]^{\pi_B} & B' \ar@<0.5ex>[l]^{\delta_B}
}%
\end{equation*}%
such that $\varepsilon _{B}f_{1}=f$; being a morphism of split epis means
that $f_0 = \pi_B f_1 \beta$, and $f_1$ is such that $\delta_B \pi_B f_1 =
f_1 \beta \alpha$.
\end{proof}

If $\mathbf{B}$ has binary products, then for every $B\in \mathbf{B}$, 
\begin{equation*}
\xymatrix@=3pc{
B \times B
\ar@<1ex>[r]^{\pi_2}
\ar@<-1ex>[r]_{\pi_1}
& B
\ar[l]|(.4){<1,1>}
}%
\end{equation*}%
satisfies the required conditions and hence $F$ has a right adjoint, $G$,
sending the object $B$ to the split epi%
\begin{equation*}
\xymatrix{
B \times B
\ar@<0.5ex>[r]^{\pi_2}
& B
\ar@<0.5ex>[l]^{<1,1>}
}%
.
\end{equation*}
And furthermore, in this case the pair $(I,G)$ is a half-reflection. For the
general case, if we ask for $(I,G)$ to be a half-reflection, then the
following result suffices.

\begin{corollary}
\label{Corolary1}Let $\mathbf{B}$ be any category and $I,F:Pt\left( \mathbf{B%
}\right) \longrightarrow \mathbf{B}$ as above. If the category $\mathbf{B}$
is equipped with an endofunctor $G_{1}:\mathbf{B}\longrightarrow \mathbf{B}$
and natural transformations%
\begin{equation*}
\xymatrix{
G_1(B)
\ar@<1ex>[r]^{\pi_B}
\ar@<-1ex>[r]_{\varepsilon_B}
& B
\ar[l]|(.4){\delta_B}
}%
\ \ \ \ ,\ \ \ \ \ \pi _{B}\delta _{B}=1_{B}=\varepsilon _{B}\delta _{B},
\end{equation*}%
satisfying the following property:\newline
for every diagram in $\mathbf{B}$ of the form%
\begin{equation*}
\xymatrix{
A
\ar[rd]_{f}
& A
\ar@<0.5ex>[l]^{t}
\\
& B
}%
\ \ \ ,\ \ \ t^{2}=t
\end{equation*}%
there exists a unique morphism%
\begin{equation*}
f^{\prime }:A\longrightarrow G_{1}\left( B\right)
\end{equation*}%
such that%
\begin{equation*}
\pi _{B}f^{\prime }=ft\ ,\ \varepsilon _{B}f^{\prime }=f\ ,\ f^{\prime
}t=\delta _{B}ft,
\end{equation*}
then the functor $F$ has a right adjoint, say $G$, and the pair $(I,G)$ is a
half reflection.
\end{corollary}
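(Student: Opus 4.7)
The plan is to reduce to Theorem~\ref{Theorem 1} by showing that the Corollary's universal property (stated for idempotents $t^{2}=t$) is equivalent, under the extra identity $\varepsilon_{B}\delta_{B}=1_{B}$, to the universal property of Theorem~\ref{Theorem 1} (stated for split epis $\alpha\beta=1$). Every split epi $A\overset{\alpha}{\to}B'\overset{\beta}{\to}A$ yields an idempotent $t=\beta\alpha$, so one direction is a straightforward substitution; the substantive point lies in the reverse translation, where $\varepsilon_{B}\delta_{B}=1_{B}$ is essential.

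Concretely, given a split epi $(\alpha,\beta)$ and $f\colon A\to B$, I apply the Corollary's hypothesis to $t=\beta\alpha$ to obtain the unique $f'\colon A\to G_{1}(B)$ with $\pi_{B}f'=ft$, $\varepsilon_{B}f'=f$ and $f't=\delta_{B}ft$. Then $\delta_{B}\pi_{B}f'=\delta_{B}ft=f't=f'\beta\alpha$, which is exactly the condition required by Theorem~\ref{Theorem 1}. For uniqueness, suppose $f''$ satisfies $\varepsilon_{B}f''=f$ and $\delta_{B}\pi_{B}f''=f''\beta\alpha$. Composing the second identity with $\varepsilon_{B}$ on the left and using $\varepsilon_{B}\delta_{B}=1_{B}$ gives $\pi_{B}f''=\varepsilon_{B}f''\beta\alpha=ft$, whence $f''t=f''\beta\alpha=\delta_{B}\pi_{B}f''=\delta_{B}ft$, so $f''$ satisfies all three Corollary-conditions and must coincide with $f'$. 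Theorem~\ref{Theorem 1} then supplies the right adjoint $G$ with $G(B)=(G_{1}(B),\pi_{B},\delta_{B},B)$, so in particular $IG=1_{\mathbf{B}}$.

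It remains to produce a natural transformation $\pi\colon 1_{Pt(\mathbf{B})}\to GI$ with $I\circ\pi=1_{I}$. I would obtain it as the adjoint transpose of the natural family $\alpha\colon F\Rightarrow I$ whose component at $X=(A,\alpha,\beta,B)$ is the structural map $\alpha\colon A=FX\to B=IX$. Explicitly, apply the universal property to $f=\alpha$ and $t=\beta\alpha$; the resulting $f_{1}\colon A\to G_{1}(B)$ satisfies $\pi_{B}f_{1}=\alpha\beta\alpha=\alpha$, $\varepsilon_{B}f_{1}=\alpha$ and $f_{1}\beta\alpha=\delta_{B}\alpha\beta\alpha=\delta_{B}\alpha$. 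Composing the last identity on the right with $\beta$ and using $\alpha\beta=1$ yields $f_{1}\beta=\delta_{B}$. Hence $(f_{1},1_{B})$ is a bona fide morphism of split epis $X\to GIX$; setting $\pi_{X}=(f_{1},1_{B})$ gives $I\pi_{X}=1_{B}=1_{IX}$ by construction, and naturality is automatic since $\pi$ is the adjoint transpose of the natural family $\alpha$.

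The main obstacle is the uniqueness half of the universal-property translation, namely extracting the identity $\pi_{B}f''=ft$ from the two conditions provided by Theorem~\ref{Theorem 1}. This is precisely where the extra hypothesis $\varepsilon_{B}\delta_{B}=1_{B}$, absent from Theorem~\ref{Theorem 1}, earns its keep.
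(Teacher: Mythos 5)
Your proof is correct and follows essentially the same route as the paper: reduce to Theorem~\ref{Theorem 1} by setting $t=\beta\alpha$, and then exhibit $\pi_X$ as the transpose of $\alpha\colon FX\to IX$ (your $f_1$ coincides with the paper's explicit $\delta_B\alpha$ by the uniqueness clause). In fact you are slightly more thorough than the paper, which only checks that the $f'$ produced from the idempotent version satisfies the two conditions of Theorem~\ref{Theorem 1} and leaves implicit the uniqueness argument via $\varepsilon_B\delta_B=1_B$ that you spell out.
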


\begin{proof}
It is clear that the property is sufficient to obtain $G$ as a right adjoint
to $F$ as in the previous Theorem, simply considering $t = \beta\alpha$ and
observing that the two conditions $\pi_B f^{\prime}= f t$, $%
f^{\prime}t=\delta_B f t$ give $\delta_B\pi_B f^{\prime}= f^{\prime}\beta
\alpha$: start with $\pi_B f^{\prime}= f t$, precompose with $\delta_B$, and
replace $\delta_B f t$ by $f^{\prime}t$.

As a consequence we have that 
\begin{equation*}
\pi_B f^{\prime}\beta = \varepsilon_B f^{\prime}\beta ,
\end{equation*}
since 
\begin{eqnarray*}
ft=ftt=\pi_B f^{\prime}t= \pi_B f^{\prime}\beta \alpha = \pi_B
f^{\prime}\beta \\
ft=\varepsilon_B \delta_B f t = \varepsilon_B f^{\prime}t= \varepsilon_B
f^{\prime}\beta \alpha =\varepsilon_B f^{\prime}\beta
\end{eqnarray*}
and hence, given $f:A\longrightarrow B$, we have $(f_1,f_0)$, with $%
f_1=f^{\prime}$ given by the \emph{universal} property and $f_0=f\beta$ ($%
=\pi_B f^{\prime}\beta = \varepsilon_B f^{\prime}\beta$).

The pair $\left( I,G\right) $ is a half-reflection with 
\begin{equation*}
\pi :1_{Pt\left( \mathbf{B}\right) }\longrightarrow GI
\end{equation*}%
given by%
\begin{equation*}
\xymatrix{
A
\ar@<0.5ex>[r]^{\alpha}
\ar@{-->}[d]^{\delta_B \alpha}
& B
\ar@<0.5ex>[l]^{\beta}
\ar@{=}[d]
\\
G_1(B) \ar@<0.5ex>[r]^{\pi_B} & B \ar@<0.5ex>[l]^{\delta_B}
}%
,
\end{equation*}%
and furthermore this is the only possibility.
\end{proof}

\begin{corollary}
In the conditions of the above Corollary (and assuming $\mathbf{B}$ is
pointed), the kernel of $\pi _{B}$ is the morphism induced by the diagram%
\begin{equation*}
\xymatrix{
B
\ar[rd]_{1}
& B
\ar@<0.5ex>[l]^{0}
\\
& B
}%
.
\end{equation*}
\end{corollary}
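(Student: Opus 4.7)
The plan is to first identify the morphism $k : B \to G_1(B)$ produced by Corollary~\ref{Corolary1} when applied to the data $A = B$, $t = 0_{B,B}$ and $f = 1_B$. Since $0^2 = 0$ in any pointed category, the hypothesis $t^2 = t$ is satisfied, and the universal property of Corollary~\ref{Corolary1} yields a unique $k$ characterised by
\[
\pi_B k = f t = 0, \qquad \varepsilon_B k = f = 1_B, \qquad k \cdot 0 = \delta_B f t = 0,
\]
the third identity being automatic. In particular $k$ is a split monomorphism with retraction $\varepsilon_B$.

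Next I would check that $k$ is a kernel of $\pi_B$. Uniqueness of any factorisation $g = k h$ is immediate, since postcomposing with $\varepsilon_B$ forces $h = \varepsilon_B g$. For existence, given $g : X \to G_1(B)$ with $\pi_B g = 0$, the only possible candidate is therefore $h := \varepsilon_B g$. To prove that $k h = g$ I would invoke Corollary~\ref{Corolary1} a second time, with $A = X$, $t = 0_X$ and $f = h$: this produces a unique morphism $X \to G_1(B)$ subject to the three identities
\[
\pi_B(-) = h \cdot 0 = 0, \qquad \varepsilon_B(-) = h, \qquad (-)\cdot 0 = 0.
\]
Both $g$ (using $\pi_B g = 0$ and $\varepsilon_B g = h$ by definition) and $k h$ (using $\pi_B k = 0$ and $\varepsilon_B k = 1_B$) satisfy these three equations, so the uniqueness clause forces $k h = g$.

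There is no real obstacle beyond bookkeeping: the whole argument reduces to the fact that the zero morphism is a split idempotent in a pointed category, together with two careful applications of the universal property already available from Corollary~\ref{Corolary1}. The only care required is to verify that the three defining equations match in each invocation, which is essentially \emph{because} the universal morphism out of Corollary~\ref{Corolary1} depends only on the composites with $\pi_B$, $\varepsilon_B$ and $t$, precisely the data shared by $g$ and $k h$ here.
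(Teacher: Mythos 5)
Your proof is correct. The paper states this corollary without giving a proof, and your argument — two invocations of the universal property of the preceding corollary, first with $A=B$, $t=0$, $f=1_B$ to construct the candidate kernel $k$ (noting $0^2=0$ and that the third equation trivializes by pointedness), and then with $A=X$, $t=0$, $f=\varepsilon_B g$ so that uniqueness forces $kh=g$ — is exactly the intended derivation and contains no gaps.
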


As mentioned in the previous section, if the canonical adjunction $\left( %
\ref{canonical adjunction BxB--->Pt(B)}\right) $ is not an equivalence we
are interested in considering bigger categories, $\mathbf{A}$, that we will
call categories of actions, in the place of $\mathbf{B\times B}$, in order
to obtain an equivalence of categories $\mathbf{A}\sim Pt\left( \mathbf{B}%
\right) $.

We now turn our attention to the category of internal actions in $\mathbf{B}$%
.

To define the category of internal actions in $\mathbf{B}$, in the sense of 
\cite{GJ1}, we only need to assume $\mathbf{B}$ to be pointed, with binary
coproducts and kernels of split epis: exactly the same conditions necessary
to consider the canonical adjunction $\left( \ref{canonical adjunction
BxB--->Pt(B)}\right) $; and the construction of the category of internal
actions is actually suggested by the adjunction. This seems to suggest an
iterative process to obtain bigger and bigger categories \textquotedblleft
of actions", $\mathbf{A}_{1},$ $\mathbf{A}_{2}\ ,...$.

\subsection{The category of internal actions}

Let $\mathbf{B}$ be a pointed category with binary coproducts and kernels of
split epis. The category of internal actions in $\mathbf{B}$, denoted $%
Act\left( \mathbf{B}\right) $, is defined as follows.

Objects are triples $\left( X,\xi ,B\right) $ where $X$ and $B$ are objects
in $\mathbf{B}$ and $\xi :B\flat X\longrightarrow X$ is a morphism such that 
\begin{eqnarray*}
\xi \eta _{X} &=&1 \\
\xi \mu _{X} &=&\xi \left( 1\flat \xi \right)
\end{eqnarray*}%
where the object $B\flat X$ is the kernel, $k:B\flat X\longrightarrow
X\sqcup B$ of $[0,1]:X\sqcup B\longrightarrow B$ and $\eta _{X}$, $\mu _{x}$
are induced, respectively, by $\iota _{1}$ and $[k\ \iota _{2}]$. See \cite%
{GJ1} for more details.

We now have to consider $Act\left( \mathbf{B}\right) $ as an
half-reflection, $\left( I,G\right) $ (with $G$ admitting a left adjoint),
over $\mathbf{B}$. Clearly we have a functor%
\begin{equation*}
\xymatrix{
\mathbf{A}
\ar[rd]_{I}
& & Pt(\mathbf{B})
\ar[ld]^{I'}
\ar[ll]_{S}
\\
& \mathbf{B}
}%
\end{equation*}%
sending a split epi $\left( A,\alpha ,\beta ,B\right) $ to $\left( X,\xi
,B\right) $ as suggested in the following diagram%
\begin{equation*}
\xymatrix{
B \flat X
\ar[r]^{k'}
\ar@{-->}[d]_{\xi}
 & X \sqcup B \ar@<0.5ex>[r]^{[0 \, 1]}
\ar[d]_{[k \, \beta]}
& B \ar@<0.5ex>[l]^{\iota_2}
\ar@{=}[d]
\\
X \ar[r]^{k}  & A \ar@<0.5ex>[r]^{\alpha} & B \ar@<0.5ex>[l]^{\beta}
}%
.
\end{equation*}%
It is well defined because $k$ (being a kernel) is monic and%
\begin{equation*}
k\xi \eta _{X}=[k\ \beta ]k^{\prime }\eta _{X}=[k\ \beta ]\iota _{1}=k
\end{equation*}%
so that $\xi \eta _{X}=1$; a similar argument shows $\xi \mu _{X}=\xi \left(
1\flat \xi \right) $.

To obtain a functor $G:\mathbf{B}\longrightarrow Act\left( \mathbf{B}\right) 
$ we compose%
\begin{equation*}
\mathbf{B}\longrightarrow Pt\left( \mathbf{B}\right) \longrightarrow
Act\left( \mathbf{B}\right)
\end{equation*}%
where $\mathbf{B}\longrightarrow Pt\left( \mathbf{B}\right) $ is the
half-reflection of Corollary \ref{Corolary1}; the resulting $G$ sends an
object $B\in \mathbf{B}$ to the internal action $\left( B^{\prime }\flat
B,\xi _{B},B\right) $ as suggested by the following diagram%
\begin{equation*}
\xymatrix{
B' \flat B
\ar[r]^{\ker}
\ar@{-->}[d]_{\xi_B}
 & B' \sqcup B \ar@<0.5ex>[r]^{[0 \, 1]}
\ar[d]_{[\ker \, \delta_B]}
& B \ar@<0.5ex>[l]^{\iota_2}
\ar@{=}[d]
\\
B' \ar[r]^{\ker}  & G_1(B) \ar@<0.5ex>[r]^{\pi_B} & B \ar@<0.5ex>[l]^{\delta_B}
}%
.
\end{equation*}%
In the case of Groups this corresponds to the action by conjugation (see 
\cite{GJ1}). The next step is to require that $G$ admits a left adjoint,
which in the case of Groups is true and it corresponds to the construction
of a semi-direct product from a given action.

For convenience, we will now assume the existence of binary products,
instead of the data $G_{1},\pi ,\delta ,\varepsilon $ of Theorem \ref%
{Theorem 1}.

For the rest of this section, and if not explicitly stated otherwise, we
will assume that $\mathbf{B}$ is a pointed category with binary products and
coproducts and kernels of split epis.

With such assumptions we automatically consider the half-reflection%
\begin{equation*}
Act\left( \mathbf{B}\right) \overset{I}{\underset{G}{\underleftarrow{%
\overrightarrow{\ \ \ \ \ \ \ \ \ \ \ }}}}\mathbf{B\ \ ,\ \ }\pi
:1\longrightarrow GI
\end{equation*}%
where $I\left( X,\xi ,B\right) =B$, $G\left( B\right) =\left( B,\xi
_{B},B\right) $ obtained from%
\begin{equation*}
\xymatrix{ B \flat B \ar[r]^{k} \ar@{-->}[d]_{\xi_B} & B \sqcup B
\ar@<0.5ex>[r]^{[0 \, 1]} \ar[d]|{[<1,0> \, <1,1>]} & B
\ar@<0.5ex>[l]^{\iota_2} \ar@{=}[d] \\ B \ar[r]^{<1,0>} & B \times B
\ar@<0.5ex>[r]^{\pi_2} & B \ar@<0.5ex>[l]^{<1,1>} }
\end{equation*}%
(note that $<0,1>$ is the kernel of $\pi _{2}$), and the natural
transformation $\pi :1\longrightarrow GI$ given by%
\begin{equation*}
\xymatrix{
(X,\xi,B)
\ar[d]_{(0,1)}
\\
(B,\xi_B,B)
}%
\ \ \ \ \ \ \ \ \ \ \ 
\quadrado
{ B \flat X  }       { \xi  }     {  X   }
{ 1 \flat 0  }                  {   0  }
{ B \flat B  }       { \xi_B  }     {   B  }%
;
\end{equation*}%
which is well defined because $\xi _{B}\left( 1\flat 0\right) =0$, since $%
\left\langle 1,0\right\rangle \xi _{B}\left( 1\flat 0\right) =\left\langle
0,0\right\rangle $:%
\begin{eqnarray*}
\left\langle 1,0\right\rangle \xi _{B}\left( 1\flat 0\right) =[\left\langle
1,0\right\rangle \ \left\langle 1,1\right\rangle ]\left( 0\sqcup 1\right)
\ker [0\ 1] = \\
=\left\langle [1\ 1],[0\ 1]\right\rangle \left( 0\sqcup 1\right) \ker [0\
1]=\left\langle 0,[0\ 1]\right\rangle \ker [0\ 1] =\left\langle
0,0\right\rangle .
\end{eqnarray*}

\begin{definition}[semi-direct products]
We will say that $\mathbf{B}$ has semidirect products, if the functor $G$
admits a left adjoint.
\end{definition}

Note that this is a weaker notion of Bourn-Janelidze categorical semidirect
products \cite{JB}, since we are not asking for the induced adjuntion
between $Act(\mathbf{B})$ and $Pt(\mathbf{B})$ to be an equivalence of
categories.

We now state a sufficient condition for $\mathbf{B}$ to have semidirect
products.

\begin{theorem}
The functor $G$, in the half-reflection%
\begin{equation*}
Act\left( \mathbf{B}\right) \overset{I}{\underset{G}{\underleftarrow{%
\overrightarrow{\ \ \ \ \ \ \ \ \ \ \ }}}}\mathbf{B\ \ ,\ \ }\pi
:1\longrightarrow GI,
\end{equation*}%
as above, admits a left adjoint if the category $\mathbf{B}$ has
coequalizers of reflexive graphs.
\end{theorem}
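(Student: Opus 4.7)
The plan is to construct the left adjoint $F:\text{Act}(\mathbf{B})\to\mathbf{B}$ as a semidirect-product functor, with $F(X,\xi,B)$ realised as a coequalizer that is computable using the hypothesis.

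\textbf{Construction of $F$.} Given $(X,\xi,B)\in\text{Act}(\mathbf{B})$, consider the parallel pair
\[
k,\ \iota_1\xi\ :\ B\flat X\ \longrightarrow\ X\sqcup B,
\]
where $k$ is the kernel of $[0,1]:X\sqcup B\to B$. This pair is not itself reflexive, but it extends to the reflexive graph
\[
[k,1_{X\sqcup B}],\ [\iota_1\xi,1_{X\sqcup B}]\ :\ B\flat X\sqcup(X\sqcup B)\ \longrightarrow\ X\sqcup B,
\]
with common section $\iota_2$. Its coequalizer exists by hypothesis and clearly coincides with the coequalizer of the original pair. Call it $q:X\sqcup B\to A$ and set $F(X,\xi,B):=A$.

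\textbf{Adjunction.} Define the candidate unit $\eta_{(X,\xi,B)}:(X,\xi,B)\to GF(X,\xi,B)=(A,\xi_A,A)$ as the pair $(q\iota_1,q\iota_2)$. To establish the universal property, I would show that for any $B'\in\mathbf{B}$ a morphism of actions $(\phi_1,\phi_2):(X,\xi,B)\to G(B')=(B',\xi_{B'},B')$ corresponds bijectively to a morphism $\tilde\phi:A\to B'$. The crux is the equivalence
\[
\phi_1\xi=\xi_{B'}(\phi_2\flat\phi_1)\ \Longleftrightarrow\ [\phi_1,\phi_2]\,k=\phi_1\xi,
\]
which translates the action-morphism condition into the assertion that $[\phi_1,\phi_2]:X\sqcup B\to B'$ coequalises $k$ and $\iota_1\xi$; such a map then factors uniquely as $\tilde\phi\circ q$, which is the required bijection. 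Functoriality of $F$ is handled by the same universal property: a morphism of actions $(f,g):(X,\xi,B)\to(X',\xi',B')$ yields $q'\circ(f\sqcup g):X\sqcup B\to A'$ which coequalises $k$ and $\iota_1\xi$ thanks to the defining condition $f\xi=\xi'(g\flat f)$, so induces a unique arrow $F(f,g):A\to A'$.

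\textbf{Main obstacle.} The equivalence displayed above is the substantive step. To prove it, one unpacks $\xi_{B'}$ from the defining diagram of $G$ (using that $<1,0>$ is the kernel of $\pi_2:B'\times B'\to B'$), together with the identity $k_{B'}(\phi_2\flat\phi_1)=(\phi_1\sqcup\phi_2)k$ built into the definition of $\flat$. A short component-wise computation in $B'\times B'$ then yields both directions, the only non-obvious point being $[0,\phi_2]k=0$, which is immediate since $[0,1]k=0$. Once this translation is in hand, the rest is bookkeeping with the universal property of the coequalizer, and naturality of $\eta$ follows from the naturality of the constructions $\flat$, $\sqcup$ and $k$.
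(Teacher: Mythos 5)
Your proposal is correct and follows essentially the same route as the paper: both realise $F(X,\xi,B)$ as the coequalizer of a reflexive pair over $X\sqcup B$ whose restriction to $B\flat X$ is $k$ versus $\iota_1\xi$ (the paper reflexivizes it as $[k\ \iota_2],\ \xi\sqcup 1:(B\flat X)\sqcup B\rightrightarrows X\sqcup B$ with section $\eta_X\sqcup 1$, you use the free reflexivization with section $\iota_2$ --- the coequalizer is the same). Your verification of the universal property, in particular the equivalence between the action-morphism condition and the equivariance condition $[\phi_1\ \phi_2]k=\phi_1\xi$ via the monomorphism $\langle 1,0\rangle=\ker\pi_2$, is sound and in fact supplies details the paper delegates to the reference \cite{GJ1}.
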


\begin{proof}
Given an object $\left( X,\xi ,B\right) $, consider the reflexive graph%
\begin{equation*}
\xymatrix{
(B \flat X) \sqcup B
\ar@<1ex>[r]^{[k \, \iota_2]}
\ar@<-1ex>[r]_{\xi \sqcup 1}
& X \sqcup B
\ar[l]|(.4){\eta \sqcup 1}
}%
.
\end{equation*}%
The left adjoint, $F$, is given by the coequalizer of $[k\ \iota _{2}]$ and $%
\xi \sqcup 1$:%
\begin{equation*}
\xymatrix{
(B \flat X) \sqcup B
\ar@<1ex>[r]^{[k \, \iota_2]}
\ar@<-1ex>[r]_{\xi \sqcup 1}
& X \sqcup B
\ar[l]|(.4){\eta \sqcup 1}
\ar[r]^{\sigma}
& F(X,\xi,B)
}%
.
\end{equation*}%
See \cite{GJ1} for more details.
\end{proof}

Let us from now on assume that $\mathbf{B}$ is a pointed category with
binary products and coproducts and kernels of split epis, and coequalizers
of reflexive graphs.

The next step is to consider the canonical functor%
\begin{equation*}
Act\left( \mathbf{B}\right) \overset{T}{\longrightarrow }Pt\left( \mathbf{B}%
\right)
\end{equation*}%
sending $\left( X,\xi ,B\right) $ to the split epi $\left( F\left( X,\xi
,B\right) ,\overline{[0\ 1]},\sigma \iota _{2}\right) $ where $\overline{[0\
1]}$ is such that $\overline{[0\ 1]}\sigma =[0\ 1]$, investigate whether it
is an equivalence of categories and study internal precategories and
reflexive graphs in $\mathbf{B}$.

First we show that under the given assumptions, it is always an adjunction.

\begin{theorem}
The functors%
\begin{equation*}
Act\left( \mathbf{B}\right) \overset{T}{\underset{S}{\underleftarrow{%
\overrightarrow{\ \ \ \ \ \ \ \ \ \ \ }}}}Pt\left( \mathbf{B}\right)
\end{equation*}%
as defined above, form an adjoint situation.
\end{theorem}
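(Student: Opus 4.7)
The plan is to exhibit unit and counit natural transformations and verify the triangle identities, with $T$ as the left adjoint and $S$ as the right adjoint (as suggested by the fact that $T$ is built from a coequalizer and $S$ from a kernel).

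For the counit $\varepsilon : TS \to 1_{Pt(\mathbf{B})}$, given $(A,\alpha,\beta,B) \in Pt(\mathbf{B})$, recall that $S(A,\alpha,\beta,B) = (X,\xi,B)$ with $X = \ker\alpha$, kernel inclusion $k:X \to A$, and $\xi : B\flat X \to X$ the unique morphism with $k\xi = [k,\beta]k'$, where $k' : B\flat X \to X \sqcup B$ is the kernel of $[0\,1]$. The morphism $[k,\beta]: X\sqcup B \to A$ coequalizes the pair $[k',\iota_2]$ and $\xi \sqcup 1$, since
\begin{equation*}
[k,\beta](\xi \sqcup 1) = [k\xi,\beta] = [[k,\beta]k',\beta] = [k,\beta][k',\iota_2].
\end{equation*}
Hence there is a unique $\varepsilon_A : F(X,\xi,B) \to A$ with $\varepsilon_A\sigma = [k,\beta]$, and one checks that $(\varepsilon_A, 1_B)$ is a morphism of split epis (from $\alpha\varepsilon_A\sigma = \alpha[k,\beta] = [0,1] = \overline{[0\,1]}\sigma$ one gets $\alpha\varepsilon_A = \overline{[0\,1]}$, and $\varepsilon_A\sigma\iota_2 = \beta$).

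For the unit $\eta : 1_{Act(\mathbf{B})} \to ST$, given $(X,\xi,B)$ write $T(X,\xi,B) = (F,\overline{[0\,1]},\sigma\iota_2,B)$; then $ST(X,\xi,B) = (X'',\xi'',B)$ with $X'' = \ker\overline{[0\,1]}$ and kernel inclusion $k_F : X'' \to F$. Since $\overline{[0\,1]}\sigma\iota_1 = [0\,1]\iota_1 = 0$, the map $\sigma\iota_1 : X \to F$ factors uniquely as $k_F \eta_X$ for some $\eta_X : X \to X''$. I take $\eta_{(X,\xi,B)} = (\eta_X,1_B)$.

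The delicate step — and the main obstacle — is to check that $(\eta_X,1_B)$ is a morphism of actions, i.e.\ that $\eta_X\xi = \xi''(1\flat\eta_X)$. Applying the mono $k_F$, the left-hand side becomes $k_F\eta_X\xi = \sigma\iota_1\xi$, while on the right, $k_F\xi''$ is $[k_F,\sigma\iota_2]$ restricted along the kernel $B\flat X'' \to X''\sqcup B$, and naturality of $B\flat(-)$ together with the coequalizer defining relation
\begin{equation*}
\sigma[k,\iota_2] = \sigma(\xi \sqcup 1)
\end{equation*}
rewrites $k_F\xi''(1\flat\eta_X) = [\sigma\iota_1,\sigma\iota_2]k = \sigma\iota_1\xi$ as well. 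Once this is established, the triangle identities are routine: $S\varepsilon \circ \eta S = 1$ follows from $k_A S(\varepsilon_A)\eta_X = \varepsilon_A\sigma\iota_1 = [k,\beta]\iota_1 = k$ together with $k_A$ being mono; and $\varepsilon T \circ T\eta = 1$ is checked after precomposing with $\sigma$, where both sides yield $\sigma = [\sigma\iota_1,\sigma\iota_2]$ by the coequalizer's universal property. Naturality of $\eta$ and $\varepsilon$ reduces in each case to the universal properties of the coequalizer defining $F$ and the kernel defining $X''$, completing the verification of the adjunction.
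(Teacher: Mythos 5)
Your proposal is correct, but it takes a genuinely different route from the paper. You establish the adjunction by exhibiting the unit $\eta_{(X,\xi,B)}=(\eta_X,1_B)$ (with $k_F\eta_X=\sigma\iota_1$) and the counit $\varepsilon_A$ (induced on the coequalizer by $[k\ \beta]$, which you correctly check coequalizes the defining pair) and then verifying the two triangle identities, using that $\sigma$ is epi and $k$, $k_F$ are monos. The paper instead verifies the hom-set bijection directly: given a morphism of split epis $(f_1,f_0):T(X,\xi,B)\to(A,\alpha,\beta,B')$ it restricts $f_1\sigma\iota_1$ along the kernel $k$ of $\alpha$ to get $g$ with $kg=f_1\sigma\iota_1$, proves the nontrivial equivariance $\xi'_A(f_0\flat g)=g\xi$, and conversely reconstructs $f_1=\overline{[kg\ \beta f_0]}$ from $(g,f_0)$ via the universal property of the coequalizer and the equivariance condition $[f\ g]k=f\xi$. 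The two arguments hinge on the same key identity, namely $\sigma k'=\sigma\iota_1\xi$ coming from the coequalizer relation (your ``delicate step'' for the unit is exactly the paper's computation specialized to $f=\sigma\iota_1$, $g=\sigma\iota_2$), so neither is more general; the paper's version has the practical advantage of recording the explicit hom-set description (morphisms out of $F(X,\xi,B)$ are equivariant cospans), which is reused repeatedly in the later structural theorems, while yours is more modular and makes the universal constructions transparent. Two small cautions: in your unit computation the displayed expression $[\sigma\iota_1,\sigma\iota_2]k$ should read $[\sigma\iota_1,\sigma\iota_2]k'$ with $k'$ the kernel of $[0\ 1]:X\sqcup B\to B$ (your $k$ was reserved for $\ker\alpha$ in the counit paragraph), and the naturality of $\eta$ and $\varepsilon$, which you defer to ``universal properties,'' is where the equivariance check the paper carries out explicitly actually lives, so it should not be regarded as entirely routine.
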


\begin{proof}
Consider the following diagram%
\begin{equation}
\quadrado
{ B \flat X  }       { \xi  }     {  X   }
{ f_0 \flat g  }                  {   g  }
{ B' \flat X'  }       { \xi_{A}'  }     {   X'  }%
\ \ \ \ \ :\ \ \ \ 
\xymatrix{
X
\ar[r]^{\sigma \iota_1}
\ar[d]_{g}
 & F(X,B) \ar@<0.5ex>[r]^{\overline{[0\ 1]}}
\ar[d]_{f_1}
& B \ar@<0.5ex>[l]^{\sigma \iota_2}
\ar[d]^{f_0}
\\
X' \ar[r]^{k}  & A \ar@<0.5ex>[r]^{\alpha} & B' \ar@<0.5ex>[l]^{\beta}
}
\label{diagram1}
\end{equation}%
where $\left( X,\xi ,B\right) $ is an object in $Act\left( \mathbf{B}\right) 
$ and $\left( X^{\prime },\xi _{A}^{\prime },B^{\prime }\right) $ is $%
S\left( A,\alpha ,\beta ,B^{\prime }\right) $.

Given $\left( f_{1},f_{0}\right) $, since $k$ is the kernel of $\alpha $ and 
\begin{equation*}
\alpha f_{1}\sigma \iota _{1}=f_{0}\overline{[0\ 1]}\sigma \iota
_{1}=f_{0}[0\ 1]\iota _{1}=0
\end{equation*}%
we obtain $g$ as the unique morphism such that $kg=f_{1}\sigma \iota _{1}$.

To prove that the pair $\left( g,f_{0}\right) $ is a morphism in $Act\left( 
\mathbf{B}\right) $, that is, the left hand square in $\left( \ref{diagram1}%
\right) $ commutes, we have to show 
\begin{equation*}
\xi _{A}^{\prime }\left( f_{0}\flat g\right) =g\xi
\end{equation*}%
and we do the following: first observe that $k\xi _{A}^{\prime }\left(
f_{0}\flat g\right) =f_{1}\sigma k^{\prime \prime }$, in fact (see diagram
below, where $k^{\prime }$ and $k^{\prime \prime }$ are kernels)%
\begin{equation*}
\xymatrix{
B \flat  X
\ar[rr]^{k''}
\ar[dd]_{f_0 \flat g}
\ar[rd]^{\xi}
& & X \sqcup B
\ar@<0.5ex>[rr]^{[0\ 1]}
\ar[dd]_(.3){g \sqcup f_0}
\ar[rd]^{\sigma}
&& B \ar@<0.5ex>[ll]^{\sigma \iota_2}
\ar[dd]^{f_0}
\ar@{=}[rd]
\\
& X
\ar[rr]^{\sigma \iota_1}
\ar[dd]_{g}
 && F(X,B) \ar@<0.5ex>[rr]^(.7){\overline{[0\ 1]}}
\ar[dd]_{f_1}
&& B \ar@<0.5ex>[ll]^{\sigma \iota_2}
\ar[dd]^{f_0}
\\
B' \flat X'
\ar[rr]^{k''}  
\ar[rd]_{\xi'_A}
&& X' \sqcup B'
\ar@<0.5ex>[rr]^(.7){[0\ 1]} 
\ar[rd]_{[k \, \beta]}
&& B' 
\ar@<0.5ex>[ll]^(.3){\iota_2}
\ar@{=}[rd]
\\
& X' \ar[rr]^{k}  && A \ar@<0.5ex>[rr]^{\alpha} && B' \ar@<0.5ex>[ll]^{\beta}
}%
\end{equation*}%
\begin{eqnarray*}
k\xi _{A}^{\prime }\left( f_{0}\flat g\right) &=&[k\ \beta ]k^{\prime
}\left( f_{0}\flat g\right) \text{\ ,\ definition of }\xi _{A}^{\prime } \\
&=&[k\ \beta ]\left( g \sqcup f_{0}\right) k^{\prime \prime } \\
&=&[kg\ \beta f_{0}]k^{\prime \prime }
\end{eqnarray*}%
and%
\begin{eqnarray*}
f_{1}\sigma k^{\prime \prime } &=&f_{1}\sigma \lbrack \iota _{1}\ \iota
_{2}]k^{\prime \prime } \\
&=&[f_{1}\sigma \iota _{1}\ f_{1}\sigma \iota _{2}]k^{\prime \prime } \\
&=&[kg\ \beta f_{0}]k^{\prime \prime };
\end{eqnarray*}%
we also have $kg\xi =f_{1}\sigma \iota _{1}\xi $,\ by definition of $g$. The
result follows from the fact that $k$ is monic and%
\begin{equation*}
\sigma k^{\prime \prime }=\sigma \iota _{1}\xi ,
\end{equation*}%
which follows from $\left( \ref{equivariance condition}\right) $ by taking $%
f=\sigma \iota _{1}$ and $g=\sigma \iota _{2}$.

Conversely, given $g$ and $f_{0}$ such that the left hand square in $\left( %
\ref{diagram1}\right) $ commutes, we find $f_{1}=\overline{[kg\ \beta f_{0}]}
$, which is well defined because (see $\ref{equivariance condition}$ below)%
\begin{equation*}
kg\xi =[kg\ \beta f_{0}]k^{\prime \prime },
\end{equation*}%
indeed we have%
\begin{eqnarray*}
kg\xi &=&k\xi _{A}^{\prime }\left( f_{0}\flat g\right) \\
&=&[k\ \beta ]k^{\prime }\left( f_{0}\flat g\right) \\
&=&[k\ \beta ]\left( g+f_{0}\right) k^{\prime \prime } \\
&=&[kg\ \beta f_{0}]k^{\prime \prime }.
\end{eqnarray*}
\end{proof}

In what follows we will need the following.

To give a morphism%
\begin{equation*}
F\left( X,\xi ,B\right) \longrightarrow B^{\prime }
\end{equation*}%
is to give a pair $\left( f,g\right) $ with $f:X\longrightarrow B^{\prime }$
and $g:B\longrightarrow B^{\prime }$ such that%
\begin{equation*}
\lbrack f\ g][k\ \iota _{2}]=[f\ g]\left( \xi \sqcup 1\right)
\end{equation*}%
or equivalently%
\begin{equation}
\lbrack f\ g]k=f\xi .  \label{equivariance condition}
\end{equation}%
See \cite{GJ1} \ for more details.

\begin{theorem}
Let $\mathbf{B}$ be a pointed category with binary products and coproducts,
kernels of split epis and coequalizers of reflexive graphs. If the canonical
functor%
\begin{equation*}
Act\left( \mathbf{B}\right) \overset{T}{\longrightarrow }Pt\left( \mathbf{B}%
\right)
\end{equation*}%
is an equivalence, then:%
\begin{equation*}
RG\left( \mathbf{B}\right) \sim \text{Pre-X-Mod}\left( \mathbf{B}\right)
\end{equation*}%
\begin{equation*}
PC\left( \mathbf{B}\right) \sim \text{2-ChainComp}\left( \mathbf{B}\right) .
\end{equation*}%
The objects in Pre-X-Mod$\left( \mathbf{B}\right) $ are pairs $\left( h,\xi
\right) $ with $h:X\longrightarrow B$ \ a morphism in $\mathbf{B}$ and $\xi
:B\flat X\longrightarrow X$ an action $\left( X,\xi ,B\right) $ in $%
Act\left( \mathbf{B}\right) $ satisfying the following condition%
\begin{equation*}
\lbrack h\ 1]k=h\xi ,
\end{equation*}%
with $k:B\flat X\longrightarrow X+B$ the kernel of $[0\ 1];$\newline
The objects in 2-ChainComp$\left( \mathbf{B}\right) $ are sequences%
\begin{equation*}
Z\overset{t}{\longrightarrow }X\overset{h}{\longrightarrow }B\ \ \ ,\ \ \
ht=0
\end{equation*}%
together with actions%
\begin{eqnarray*}
\xi _{X} &:&B\flat X\longrightarrow X \\
\xi _{Z} &:&X\flat Z\longrightarrow Z \\
\xi _{F(Z,X)} &:&F(X,B)\flat F(Z,X)\longrightarrow F(Z,X)
\end{eqnarray*}%
subject to the following conditions%
\begin{eqnarray*}
\lbrack h\ 1]k_{X} &=&h\xi _{X} \\
\lbrack t\ 1]k_{Z} &=&t\xi _{Z} \\
\lbrack \sigma \iota _{1}\overline{[t\ 1]}\ 1]k_{F\left( Z,X\right) }
&=&\sigma \iota _{1}\overline{[t\ 1]}\xi _{F\left( Z,X\right) } \\
\overline{\lbrack 0\ 1]}\xi _{F\left( Z,X\right) } &=&\xi _{X}\left( 
\overline{[h\ 1]}\flat \overline{[0\ 1]}\right) \\
\sigma \iota _{2}\xi _{X} &=&\xi _{F\left( Z,X\right) }\left( \sigma \iota
_{2}\flat \sigma \iota _{2}\right) .
\end{eqnarray*}
\end{theorem}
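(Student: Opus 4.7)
The plan is to mimic the proof of the additive analogue (the first of the two conclusions in Section 3), with the canonical equivalence $\mathbf{B}\times\mathbf{B}\sim Pt(\mathbf{B})$ replaced by the assumed equivalence $T:Act(\mathbf{B})\sim Pt(\mathbf{B})$. Under $T$, every split epi $(C_{1},d,e,C_{0})$ is, up to isomorphism, of the canonical form $\overline{[0\ 1]}:F(X,\xi_{X},B)\to B$ with section $\sigma\iota_{2}$, and morphisms out of $F(X,\xi_{X},B)$ are classified by the equivariance condition $(\ref{equivariance condition})$. For the first claim, a reflexive graph is such a canonical split epi together with an extra map $c$ with $ce=1$; applying $(\ref{equivariance condition})$ translates $c$ bijectively into a morphism $h:X\to B$ subject to $[h\ 1]k_{X}=h\xi_{X}$, and functoriality of $T$ upgrades this object-level bijection to the equivalence $RG(\mathbf{B})\sim$ Pre-X-Mod$(\mathbf{B})$.

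For precategories the same analysis is run twice. First, apply $T$ to the base reflexive graph to obtain the pre-crossed module $(h,\xi_{X})$ as above. Next, the split square $(\ref{split quare 1})$ is a split epi in $Pt(\mathbf{B})$ over this base; applying $T$ to the top split epi $(C_{2},\pi_{2},e_{2},C_{1})$ yields an action of $C_{1}\cong F(X,\xi_{X},B)$ on some object, and the fact that $(\ref{split quare 1})$ is a split epi of split epis (so that the whole upper row is compatible with the splitting at the base) forces this action to descend, by a second application of $(\ref{equivariance condition})$, to an action $\xi_{Z}:X\flat Z\to Z$ of $X$ on an object $Z$, exactly as the additive analysis replaced $Y\sqcup(X\sqcup B)$ by $Z\sqcup X$. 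The map $m$ with $me_{2}=1$ then translates, again via $(\ref{equivariance condition})$, into a morphism $t:Z\to X$; the condition $dm=d\pi_{2}$ forces $ht=0$, producing the 2-chain; $cm=c\pi_{1}$ yields the pre-crossed module condition on $t$; and the splitness of $(\ref{split quare 1})$ together with $me_{1}=1$ produces the third action $\xi_{F(Z,X)}$, its own pre-crossed module condition on $\sigma\iota_{1}\overline{[t\ 1]}$, and the two compatibility equations coupling $\xi_{F(Z,X)}$ to $\xi_{X}$.

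The hard part will be the last two listed equations. These are not per-level pre-crossed module conditions but compatibilities between the three actions, and they must be extracted precisely from the simultaneous demands that $(\ref{split quare 1})$ is a split square and that $m$ satisfies all of $dm=d\pi_{2}$, $cm=c\pi_{1}$, $me_{1}=me_{2}=1$. The careful bookkeeping consists of tracking the three kernels $k_{X}$, $k_{Z}$, $k_{F(Z,X)}$ through the left adjoint $F$ at each level, and using that $F$ is compatible with the coequalizer presentation encoded in $(\ref{equivariance condition})$, so that each structural constraint of the precategory converts to exactly one of the listed conditions and, conversely, the listed conditions suffice to reconstruct the precategory uniquely.
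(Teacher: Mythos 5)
Your overall strategy is the paper's: iterate the equivalence $Act(\mathbf{B})\sim Pt(\mathbf{B})$ to put every split epi into the canonical form $F(X,\xi,B)\to B$ and classify the extra structure maps via the equivariance condition $(\ref{equivariance condition})$, and the reflexive-graph half is essentially correct. But the precategory half misassigns where the data and the listed conditions come from, and the misassignments are not cosmetic. The action $\xi_{Z}:X\flat Z\to Z$ is not obtained by ``descending'' the action of $F(X,B)$ on the top kernel object; both actions survive into the final data. The correct bookkeeping is three separate applications of the equivalence: to the bottom split epi (giving $\xi_{X}$), to the top split epi of the split square (giving an object $Y$ with the action $\xi'=\xi_{F(Z,X)}$ of $F(X,B)$), and then to the split epi $(Y,a,b,X)$ obtained by restricting the vertical split epi of split epis to kernels (giving $Y\cong F(Z,X)$ and $\xi_{Z}$). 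Your account collapses the second and third steps and then claims $\xi_{F(Z,X)}$ is ``produced'' by the splitness together with $me_{1}=1$, which is backwards: $me_{1}=1$ only yields $ub=1$, which is then absorbed into writing $u=\overline{[t\ 1]}$.

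The roles of the precategory axioms are also swapped. The condition $dm=d\pi_{2}$ does not force $ht=0$; it forces the morphism $v$ (where $m=\overline{[v\ 1]}$ comes from $me_{2}=1$) to factor through the kernel $\sigma\iota_{1}$ of $\overline{[0\ 1]}$, which is what produces $u:Y\to X$ and ultimately $t:Z\to X$. It is $cm=c\pi_{1}$, i.e.\ $ha=hu$, that becomes $ht=0$ after the identifications $a=\overline{[0\ 1]}$ and $u=\overline{[t\ 1]}$. Likewise the pre-crossed-module condition $[t\ 1]k_{Z}=t\xi_{Z}$ is not a consequence of $cm=c\pi_{1}$: it is the equivariance condition needed for $\overline{[t\ 1]}$ to exist at all as a morphism out of $F(Z,X)$. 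Finally, the two coupling equations you defer as ``the hard part'' have a precise source you do not identify: they are exactly the commuting squares expressing that the two vertical maps of the split square, $F(a,\overline{[h\ 1]})$ and $F(b,\sigma\iota_{2})$, are images under the equivalence of morphisms of actions, which after renaming read $\overline{[0\ 1]}\xi_{F(Z,X)}=\xi_{X}\left(\overline{[h\ 1]}\flat\overline{[0\ 1]}\right)$ and $\sigma\iota_{2}\xi_{X}=\xi_{F(Z,X)}\left(\sigma\iota_{2}\flat\sigma\iota_{2}\right)$. Without naming that source, the converse direction --- that the listed conditions suffice to reconstruct the precategory --- is not actually established.
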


\begin{proof}
Using the equivalence $T$, a reflexive graph in $\mathbf{B}$ is of the form%
\begin{equation*}
\xymatrix{
F(X,B)
\ar@<1ex>[r]^{\overline{[0 \, 1]}}
\ar@<-1ex>[r]_{c}
& B
\ar[l]|{\sigma \iota_2}
}%
\ \ \ \ c\sigma \iota _{2}=1.
\end{equation*}%
By definition of $F\left( X,B\right) $ we have%
\begin{equation*}
\xymatrix{
X
\ar[r]^{\sigma \iota_1}
\ar[rd]_{h=c \sigma \iota_1}
& F(X,B)
\ar[d]^{c}
& B
\ar[l]_{\sigma \iota_2}
\ar@{=}[ld]
\\
& B
}%
\end{equation*}%
and the pair $\left( h,1\right) $ induces $c=\overline{[h\ 1]}$ if and only
if%
\begin{equation*}
\lbrack h\ 1]k=h\xi .
\end{equation*}

For a precategory, observing that a split square $\left( \ref{split quare 1}%
\right) $ is in fact a split epi in $Pt\left( \mathbf{B}\right) $ and using
the equivalence $Act\left( \mathbf{B}\right) \sim Pt\left( \mathbf{B}\right) 
$ we have that every such split square is of the form%
\begin{equation*}
\xymatrix{
F(Y, F(X,B))
\ar[r]^{\overline{[0 \, 1]}}
\ar[d]_{F(a, \overline{[h,1]})}
& F(X,B)
\ar@<1ex>[l]^{\sigma \iota_2}
\ar[d]_{\overline{[h \, 1]}}
\\
F(X,B)
\ar[r]^{\overline{[0 \, 1]}}
\ar@<-1ex>[u]_{F(b ,\sigma \iota_2)}
& B
\ar@<1ex>[l]^{\sigma \iota_2}
\ar@<-1ex>[u]_{\sigma \iota_2}
}%
,
\end{equation*}%
and hence giving such a split square is to give internal actions $\left(
X,\xi ,B\right) $ and $\left( Y,\xi ^{\prime },F\left( X,B\right) \right) $
together with morphisms $a,b,h$ such that the following squares commute%
\begin{equation*}
\xymatrix{
F(X,B) \flat Y
\ar[r]^{\xi'}
\ar@<-0.5ex>[d]_{\overline{[h \, 1]} \flat a}
& Y
\ar@<-0.5ex>[d]_{a}
\\
B \flat X
\ar@<-0.5ex>[u]_{\sigma \iota_2 \flat b}
\ar[r]^{\xi}
& X
\ar@<-0.5ex>[u]_{b}
\ar[r]^{h}
& B
}%
\end{equation*}%
and%
\begin{equation*}
\lbrack h\ 1]k=h\xi .
\end{equation*}%
It remains to insert the morphism%
\begin{equation*}
m:F\left( Y,F\left( X,B\right) \right) \longrightarrow F\left( X,B\right)
\end{equation*}%
satisfying the following conditions%
\begin{eqnarray}
m\sigma \iota _{2} &=&1  \label{eq4} \\
mF\left( b,\sigma \iota _{2}\right) &=&1  \label{eq5} \\
\overline{\lbrack h\ 1]}m &=&\overline{[h\ 1]}F\left( a,\overline{[h\ 1]}%
\right)  \label{eq6} \\
\overline{\lbrack 0\ 1]}m &=&\overline{[0\ 1]}\overline{[0\ 1]}.  \label{eq7}
\end{eqnarray}%
From $\left( \ref{eq4}\right) $ we conclude that $m=\overline{[v\ 1]}$ for
some $v:Y\longrightarrow F\left( X,B\right) $ such that%
\begin{equation*}
\lbrack v\ 1]k^{\prime }=v\xi ^{\prime }.
\end{equation*}%
Using $\left( \ref{eq7}\right) $ we conclude that $\overline{[0\ 1]}v=0$ so
that $v$ factors through the kernel of $\overline{[0\ 1]}$, which is $\sigma
\iota _{1}$ because $T$ is an equivalence, and finally we have%
\begin{equation*}
m=\overline{[\sigma \iota _{1}u\ 1]}
\end{equation*}%
for some $u:Y\longrightarrow X$ such that%
\begin{equation*}
\lbrack \sigma \iota _{1}u\ 1]k^{\prime }=\sigma \iota _{1}u\xi ^{\prime }.
\end{equation*}%
Condition $\left( \ref{eq5}\right) $ gives $ub=1$ while condition $\left( %
\ref{eq6}\right) $ gives $ha=hu$:%
\begin{align*}
mF\left( b,\sigma \iota _{2}\right) & =1\Leftrightarrow \overline{[\sigma
\iota _{1}u\ 1]}F\left( b,\sigma \iota _{2}\right) =1\Leftrightarrow 
\overline{[\sigma \iota _{1}u\ 1]}\overline{[\sigma \iota _{1}b\ \sigma
\iota _{2}\sigma \iota _{2}]}=1\Leftrightarrow \\
& \Leftrightarrow \overline{[\sigma \iota _{1}u\ 1]}[\sigma \iota _{1}b\
\sigma \iota _{2}\sigma \iota _{2}]=\sigma \Leftrightarrow \lbrack \sigma
\iota _{1}ub\ \ \sigma \iota _{2}]=[\sigma \iota _{1}\ \sigma \iota
_{2}]\Leftrightarrow \\
& \Leftrightarrow \sigma \iota _{1}ub=\sigma \iota _{1}\Leftrightarrow ub=1;
\end{align*}%
\begin{eqnarray*}
\overline{\lbrack h\ 1]}m &=&\overline{[h\ 1]}F\left( a,\overline{[h\ 1]}%
\right) \Leftrightarrow \overline{[h\ 1]}\overline{[\sigma \iota _{1}u\ 1]}=%
\overline{[h\ 1]}\overline{[\sigma \iota _{1}a\ \ \sigma \iota _{2}\overline{%
[h\ 1]}]}\Leftrightarrow \\
&\Leftrightarrow &\overline{[h\ 1]}[\sigma \iota _{1}u\ 1]=\overline{[h\ 1]}%
[\sigma \iota _{1}a\ \ \sigma \iota _{2}\overline{[h\ 1]}]\Leftrightarrow \\
&\Leftrightarrow &[hu\ \overline{[h\ 1]}]=[ha\ \overline{[h\ 1]}%
]\Leftrightarrow hu=ha.
\end{eqnarray*}

Conclusion 1: A precategory in $\mathbf{B}$ is given by the following data%
\begin{equation}
\xymatrix{
F(X,B) \flat Y
\ar[r]^{\xi'}
\ar@<-0.5ex>[d]_{\overline{[h \, 1]} \flat a}
& Y
\ar@<-0.5ex>[d]_{a}
\ar@/^1pc/[d]^{u}
\\
B \flat X
\ar@<-0.5ex>[u]_{\sigma \iota_2 \flat b}
\ar[r]^{\xi}
& X
\ar@<-0.5ex>[u]_{b}
\ar[r]^{h}
& B
}
\label{diagram2}
\end{equation}%
such that \thinspace $\xi $,$\xi ^{\prime }$ are internal actions, the
obvious squares commute, and the following conditions are satisfied%
\begin{eqnarray}
hu &=&ha  \label{eq8 and folowings} \\
ub &=&1=ab  \notag \\
\lbrack \sigma \iota _{1}u\ 1]k^{\prime } &=&\sigma \iota _{1}u\xi ^{\prime }
\notag \\
\lbrack h\ 1]k &=&h\xi .  \notag
\end{eqnarray}

We now continue to investigate it further and replace the split epi $\left(
Y,a,b,X\right) $ with an action $\left( Z,\xi _{Z},B\right) $. For
convenience we will also rename $\xi _{X}:=\xi $, $k_{X}:=k$, $\xi
_{F(Z,X)}:=\xi ^{\prime }$, $k_{F\left( Z,X\right) }=k^{\prime }$. The
diagram $\left( \ref{diagram2}\right) $ becomes%
\begin{equation*}
\xymatrix{
F(X,B) \flat F(Z,X)
\ar[r]^{\xi_{F(Z,X)}}
\ar@<-0.5ex>[d]_{\overline{[h \, 1]} \flat \overline{[0 \, 1]}}
& F(Z,X)
\ar@<-0.5ex>[d]_{\overline{[0 \, 1]}}
\ar@/^2pc/[d]^{\overline{[t \, 1]}}
\\
B \flat X
\ar@<-0.5ex>[u]_{\sigma \iota_2 \flat \sigma \iota_2}
\ar[r]^{\xi_X}
& X
\ar@<-0.5ex>[u]_{\sigma \iota_2}
\ar[r]^{h}
& B
}%
\end{equation*}%
for some $t:Z\longrightarrow X$ such that $[t\ 1]k_{Z}=t\xi _{Z}$. The
commutativity of the appropriate squares in the diagram above plus the
reinterpretation of conditions $\left( \ref{eq8 and folowings}\right) $
gives the stated result.
\end{proof}

\begin{remark}
In order to consider a reflexive graph $\left( h:X\longrightarrow B,\xi
:B\flat X\longrightarrow X\right) $ as a precategory of the form%
\begin{equation*}
\xymatrix{
F(X,B) \flat X
\ar[r]^{\xi (\overline{[h \, 1]} \flat 1)}
\ar@<-0.5ex>[d]_{\overline{[h \, 1]} \flat 1}
& X
\ar@<-0.5ex>[d]_{1}
\ar@/^1pc/[d]^{1}
\\
B \flat X
\ar@<-0.5ex>[u]_{\sigma \iota_2 \flat 1}
\ar[r]^{\xi}
& X
\ar@<-0.5ex>[u]_{1}
\ar[r]^{h}
& B
}%
\end{equation*}%
we need, in addition to $[h\ 1]k=h\xi $ that%
\begin{equation*}
\lbrack \sigma \iota _{1}\ 1]k^{\prime }=\sigma \iota _{1}\xi \left( 
\overline{[h\ 1]}\flat 1\right)
\end{equation*}%
where $k:B\flat X\longrightarrow X\sqcup B$ and $k^{\prime }:F\left(
X,B\right) \flat X\longrightarrow X\sqcup F\left( X,B\right) $ are the
kernels of $[0\ 1].$\newline
In the case of Groups, this corresponds to the Peiffer identity that
distinguishes a precrossed module from a crossed module.
\end{remark}

We now give a characterization of categories $\mathbf{B}$ such that $%
Act\left( \mathbf{B}\right) \sim Pt\left( \mathbf{B}\right) .$

\begin{theorem}
Let $\mathbf{B}$ be a pointed category with binary products and coproducts,
kernels of split epis and coequalizers of reflexive graphs. The canonical
functor%
\begin{equation*}
Act\left( \mathbf{B}\right) \overset{T}{\longrightarrow }Pt\left( \mathbf{B}%
\right)
\end{equation*}%
is an equivalence if and only if the following two properties holds in $%
\mathbf{B}$:\newline
(A1) for every diagram of the form%
\begin{equation}
\xymatrix{
X \ar[r]^{\sigma \iota_1}  & F(X, B) \ar@<0.5ex>[r]^{\overline{[0 \, 1]}} & B \ar@<0.5ex>[l]^{\sigma \iota_2}
}%
\end{equation}%
the morphism $\sigma \iota _{1}$ is the kernel of $\overline{[0\ 1]}$;%
\newline
(A2) the split short five lemma holds.
\end{theorem}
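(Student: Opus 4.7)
The argument runs parallel to Theorem~\ref{Th 2}, with the canonical adjunction $\mathbf{B}\times\mathbf{B}\leftrightarrows Pt(\mathbf{B})$ replaced by $Act(\mathbf{B})\leftrightarrows Pt(\mathbf{B})$. The plan is to decompose the claim that $T$ is an equivalence into the two conditions that the unit $1\to ST$ and the counit $TS\to 1$ are natural isomorphisms, and to match each with one of the axioms.

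For $(b)\Rightarrow(a)$, I would first show $ST\cong 1_{Act(\mathbf{B})}$ using only (A1). Given $(X,\xi,B)$, the composite $ST(X,\xi,B)$ is $(\ker\overline{[0\,1]},\xi'',B)$, where $\xi''$ is induced from the general construction of $S$ applied to $T(X,\xi,B)$. Axiom (A1) identifies $\ker\overline{[0\,1]}$ with $X$ via $\sigma\iota_1$, and the equivariance condition (\ref{equivariance condition}), applied to the identity on $F(X,\xi,B)$ and its kernel-to-kernel restriction, forces $\xi''=\xi$. Next, I would show $TS\cong 1_{Pt(\mathbf{B})}$ using both axioms. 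For $(A,\alpha,\beta,B)$ with $k=\ker\alpha$ and $S(A,\alpha,\beta,B)=(X,\xi'_A,B)$, the counit is the unique morphism $\overline{[k\,\beta]}\colon F(X,\xi'_A,B)\to A$, which is well-defined precisely by (\ref{equivariance condition}). By (A1) we obtain the commutative diagram of split extensions
\begin{equation*}
\xymatrix{
X\ar[r]^(.35){\sigma\iota_1}\ar@{=}[d] & F(X,\xi'_A,B)\ar@<.5ex>[r]^(.7){\overline{[0\,1]}}\ar[d]^{\overline{[k\,\beta]}} & B\ar@<.5ex>[l]^(.3){\sigma\iota_2}\ar@{=}[d]\\
X\ar[r]_{k} & A\ar@<.5ex>[r]^{\alpha} & B\ar@<.5ex>[l]^{\beta}
}
\end{equation*}
to which (A2) applies, making $\overline{[k\,\beta]}$ an isomorphism.

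For $(a)\Rightarrow(b)$, (A1) falls out directly: the unit of $T\dashv S$ at $(X,\xi,B)$ is the canonical factorisation of $\sigma\iota_1$ through $\ker\overline{[0\,1]}$ (this being legitimate because $\overline{[0\,1]}\sigma\iota_1=[0\,1]\iota_1=0$); if $ST\cong 1$, this factorisation is an iso, so $\sigma\iota_1$ itself is the kernel. For (A2), given the data of diagram (\ref{A2}) with $f,g$ isomorphisms, functoriality of $S$ yields a morphism $(f,g)\colon(X,\xi'_A,B)\to(X',\xi'_{A'},B')$ in $Act(\mathbf{B})$, which is itself an isomorphism there since its two components are isos in $\mathbf{B}$. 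Hence $T(f,g)$ is an isomorphism in $Pt(\mathbf{B})$, and naturality of the counit expresses $h=\varepsilon_{A'}\circ T(f,g)\circ\varepsilon_A^{-1}$ as a composite of isos.

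The main obstacle is the verification inside $(b)\Rightarrow(a)$ that the induced action $\xi''$ on $\ker\overline{[0\,1]}$ actually equals $\xi$ and not merely a conjugate of it. This requires tracking how the kernel $k'\colon B\flat F(X,\xi,B)\to F(X,\xi,B)\sqcup B$ interacts with the coequaliser $\sigma$ defining $F(X,\xi,B)$, then applying the universal property that pins down the action produced by $S$. Once this identification is in place, the rest of the argument is formal and uses only the adjunction $T\dashv S$ already established in the paper.
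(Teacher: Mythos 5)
Your proposal is correct and follows exactly the route the paper intends: the paper's own proof is the single line ``Similar to Theorem~\ref{Th 2}'', and your argument is precisely that adaptation, matching (A1) with $ST\cong 1$ and (A1)+(A2) with $TS\cong 1$ via the split short five lemma applied to $\overline{[k\ \beta]}$. You have in fact supplied more detail than the paper does, and you correctly isolate the one point genuinely new relative to the additive case, namely checking that the induced action $\xi''$ on $\ker\overline{[0\ 1]}$ coincides with $\xi$.
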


\begin{proof}
Similar to Theorem \ref{Th 2}.
\end{proof}

\section{The general case}

Let%
\begin{equation*}
\mathbf{A}\overset{I}{\underset{G}{\underleftarrow{\overrightarrow{\ \ \ \ \
\ \ \ \ \ \ }}}}\mathbf{B\ \ ,\ \ }\pi :1\longrightarrow GI
\end{equation*}%
be a half-reflection.

Define a new category, denoted by $\mathbf{A}_{1}$\ as follows:\newline
Objects are pairs $\left( A,u\right) $ with $A\in \mathbf{A}$ and%
\begin{equation*}
u:A\longrightarrow GIA
\end{equation*}%
such that $I\left( u\right) =1.$\newline
A morphism $f:\left( A,u\right) \longrightarrow \left( A^{\prime },u^{\prime
}\right) $ is a morphism $f:A\longrightarrow A^{\prime }$ in $\mathbf{A}$
such that%
\begin{equation*}
\quadrado
{ A  }       { u  }     {  GIA   }
{ f  }                  {  GIf   }
{ A'  }       {  u' }     {  GIA'   }%
.
\end{equation*}

Define another category, denoted $\mathbf{A}_{2}$, as follows:\newline
Objects are systems 
\begin{equation*}
\left( \left( E,v\right) ,a,b,\left( A,u\right) \right)
\end{equation*}%
where $\left( E,v\right) $ and $\left( A,u\right) $ are objects in $\mathbf{A%
}_{1}$,%
\begin{equation*}
a:\left( E,v\right) \longrightarrow \left( A,u\right)
\end{equation*}%
is a morphism in $\mathbf{A}_{1}$, and 
\begin{equation*}
b:A\longrightarrow E
\end{equation*}%
is a morphism in $\mathbf{A}$ such that 
\begin{equation*}
ab=1_{A}.
\end{equation*}

Let $\mathbf{A}\overset{T}{\longrightarrow }$Pt$\left( \mathbf{B}\right) $
be any subcategory of Pt$\left( \mathbf{B}\right) $, not necessarily full,
we may consider the subcategories of reflexive graphs and internal
precategories in $\mathbf{B}$, restricted to split epis in $T\left( \mathbf{A%
}\right) $, and denote it respectively by $RG_{\mathbf{A}}\left( \mathbf{B}%
\right) $ and $PC_{\mathbf{A}}\left( \mathbf{B}\right) $.

In particular if the functor $G$, as above, admits a left adjoint $\left(
F,G,\eta ,\varepsilon \right) $ and $F$ is faithful and injective on
objects, then the canonical functor%
\begin{equation*}
\mathbf{A}\overset{T}{\longrightarrow }Pt\left( \mathbf{B}\right)
\end{equation*}%
determines a subcategory of split epis and so we have:\newline
Reflexive graphs internal to $\mathbf{B}$ and restricted to the split epis
in $T\left( \mathbf{A}\right) ,$ denoted $RG_{\mathbf{A}}\left( \mathbf{B}%
\right) $ as follows:%
\begin{equation*}
\xymatrix{
FA
\ar@<1ex>[rr]^{\varepsilon_{IA} F(\pi_A)}
\ar@<-1ex>[rr]_{c}
& & IA
\ar[ll]|{I(\eta_A)}
}%
\ \ \ \ ,\ \ \ cI\left( \eta _{A}\right) =1_{IA};
\end{equation*}%
for some $A\in \mathbf{A}$.\newline
Internal precategories in $\mathbf{B}$ relative to split epis in $T\left( 
\mathbf{A}\right) $, denoted by $PC_{\mathbf{A}}\left( \mathbf{B}\right) $,
as follows (where we use $\pi _{A}^{\prime }$ as an abbreviation to $%
\varepsilon _{IA}F\left( \pi _{A}\right) $ and similarly to $\pi
_{E}^{\prime }$)%
\begin{equation}
\xymatrix{
F(E)
\ar@<3ex>[rr]^{\pi'_E}
\ar@<-3ex>[rr]_{F(a)}
\ar[rr]|{m}
& & IE=FA
\ar@<-1.5ex>[ll]|{I(\eta_E)}
\ar@<1.5ex>[ll]|{F(b)}
\ar@<1.5ex>[rr]^{\pi'_A}
\ar@<-1.5ex>[rr]_{c}
& & IA
\ar[ll]|{I(\eta_A)}
}
\label{FE,IE=FA,IA}
\end{equation}%
for some%
\begin{equation*}
\xymatrix{
E
\ar@<1ex>[r]^{a}
& A
\ar[l]^{b}
}%
\ \ \ ,\ \ \ ab=1_{A}
\end{equation*}%
in $\mathbf{A}$, and satisfying the following conditions%
\begin{eqnarray}
cI\left( \eta _{A}\right) &=&1_{IA}  \label{c1} \\
I\left( a\right) &=&c  \label{c2} \\
I\left( b\right) &=&I\left( \eta _{A}\right)  \label{c3} \\
mI\left( \eta _{E}\right) &=&1_{IE}  \label{c4} \\
mF\left( b\right) &=&1_{IE}  \label{c5} \\
cm &=&cF\left( a\right)  \label{c6} \\
\pi _{A}^{\prime }m &=&\pi _{A}^{\prime }\pi _{E}^{\prime }.  \label{c7}
\end{eqnarray}

We observe that $c$ is determined by $a$, and $\left( \ref{c1}\right) $
follows from $\left( \ref{c2}\right) $, $\left( \ref{c3}\right) $ and the
fact that $ab=1_{A}$. We will be also interested in the notion of
multiplicative graph, which is obtained by removing $\left( \ref{c6}\right) $
and $\left( \ref{c7}\right) $ and in some cases we may be also interested in
removing $\left( \ref{c5}\right) $ so that the definition may be transported
from $\mathbf{B}$ to $\mathbf{A}$ and it does not depend on whether or not $%
G $ admits a left adjoint.

\begin{theorem}
For a half-reflection%
\begin{equation*}
\mathbf{A}\overset{I}{\underset{G}{\underleftarrow{\overrightarrow{\ \ \ \ \
\ \ \ \ \ \ }}}}\mathbf{B\ \ ,\ \ }\pi :1\longrightarrow GI,
\end{equation*}%
if the functor $G$ admits a left adjoint%
\begin{equation*}
\left( F,G,\eta ,\varepsilon \right) ,
\end{equation*}%
and $F$ is faithful and injective on objects, then%
\begin{eqnarray}
\mathbf{A}_{1} &\cong &RG_{\mathbf{A}}\left( \mathbf{B}\right)  \label{iso1}
\\
\mathbf{A}_{2}^{\ast } &\cong &PC_{\mathbf{A}}\left( \mathbf{B}\right)
\label{iso2}
\end{eqnarray}%
where $\mathbf{A}_{2}^{\ast }$ is the subcategory of $\mathbf{A}_{2}$ given
by the objects%
\begin{equation*}
\left( \left( E,v\right) ,a,b,\left( A,u\right) \right)
\end{equation*}%
such that%
\begin{eqnarray*}
IE &=&FA \\
I\left( a\right) &=&\varepsilon _{IA}F\left( u\right) \\
vb &=&\eta _{A} \\
G\left( \varepsilon _{IA}F\left( \pi _{A}\right) \right) \pi _{E} &=&G\left(
\varepsilon _{IA}F\left( \pi _{A}\right) \right) v.
\end{eqnarray*}
\end{theorem}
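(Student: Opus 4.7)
The plan is to transpose the $\mathbf{A}$-side data via the adjunction $F \dashv G$ into the $\mathbf{B}$-side diagrams, using the half-reflection identities to match the defining conditions. On objects, given $(A, u) \in \mathbf{A}_1$ set $c = \varepsilon_{IA} F(u) \colon FA \to IA$; conversely, from $c$ recover $u = G(c) \eta_A$. The key observation is that $IG = 1_{\mathbf{B}}$, so applying $I$ to $u = G(c) \eta_A$ yields $I(u) = c \cdot I(\eta_A)$. Hence the defining condition $I(u) = 1_{IA}$ of $\mathbf{A}_1$ is equivalent to the reflexive-graph condition $c \cdot I(\eta_A) = 1_{IA}$, and these transpositions are mutually inverse by the unit--counit triangle identities. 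This gives the object-level bijection for $\eqref{iso1}$.

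For morphisms of $\mathbf{A}_1$, a map $f \colon (A, u) \to (A', u')$ satisfies $u' f = GI(f) u$; taking the $\mathbf{B}$-adjunct of both sides via naturality of $\eta$ (and using $u = G(c)\eta_A$) yields $c' F(f) = I(f) c$, which is the compatibility for a morphism in $RG_{\mathbf{A}}(\mathbf{B})$ over the split-epi morphism $T(f) = (F(f), I(f))$. Since $F$ is faithful and injective on objects, $T$ embeds $\mathbf{A}$ as a (non-full) subcategory of $Pt(\mathbf{B})$, so every morphism in $RG_{\mathbf{A}}(\mathbf{B})$ arises uniquely from such an $f$; this completes $\eqref{iso1}$.

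For $\eqref{iso2}$ I would proceed by the same transposition applied twice. Given $((E, v), a, b, (A, u)) \in \mathbf{A}_2^{\ast}$ with $IE = FA$, build the precategory $\eqref{FE,IE=FA,IA}$ by taking the right reflexive graph from $(A, u)$ as above, the left split-epi $(FE, \pi_E', I(\eta_E), FA)$ from $E$, the horizontal maps $F(a)$ and $F(b)$, and the multiplication $m = \varepsilon_{IE} F(v) \colon FE \to FA$. Then conditions $\eqref{c1}$--$\eqref{c7}$ unfold as follows: $\eqref{c1}$ and $\eqref{c4}$ translate $I(u)=1$ and $I(v)=1$ exactly as in the reflexive-graph case; $\eqref{c2}$ is the starred equality $I(a) = \varepsilon_{IA} F(u)$; $\eqref{c5}$ is the $\mathbf{B}$-adjunct of $vb = \eta_A$ via the triangle identity $\varepsilon_{FA} F(\eta_A) = 1$; $\eqref{c3}$ then follows by applying $I$ to $vb = \eta_A$ and using $I(v)=1$; $\eqref{c6}$ combines $\eqref{c2}$ with the $\mathbf{A}_1$-compatibility $ua = GI(a) v$ via naturality of $\varepsilon$; and $\eqref{c7}$ is the $\mathbf{B}$-adjunct of the last starred equation $G(\pi_A') \pi_E = G(\pi_A') v$. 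Conversely, from a precategory in $PC_{\mathbf{A}}(\mathbf{B})$ the two split epis produce $(A, u)$ and $(E, v)$ with $IE = FA$, the morphisms $a, b$ lift uniquely from $F(a), F(b)$ because $T$ is an embedding, and the starred conditions are recovered by transposing $\eqref{c2}$, $\eqref{c5}$, $\eqref{c6}$, $\eqref{c7}$ back via the adjunction.

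The main obstacle is the careful bookkeeping: matching seven $\mathbf{B}$-conditions against four starred equations plus the $\mathbf{A}_1$-memberships of $(A,u)$ and $(E,v)$ and the $\mathbf{A}_1$-morphism nature of $a$, without double-counting or omitting anything. In particular one must check that the apparently-missing condition $\eqref{c3}$ is automatically forced by $\eqref{c5}$ together with $I(v) = 1$, and that transposition genuinely gives mutually inverse assignments (not merely a surjection), which is where the triangle identities and the faithfulness of $F$ are crucial.
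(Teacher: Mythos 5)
Your proposal is correct and follows essentially the same route as the paper: transpose $u$ and $v$ across the adjunction $F\dashv G$ (via $c=\varepsilon_{IA}F(u)$, $m=\varepsilon_{IE}F(v)$ and their inverses $u=G(c)\eta_A$, $v=G(m)\eta_E$), use $IG=1_{\mathbf{B}}$ plus faithfulness and injectivity on objects of $F$ to make the correspondence bijective, and match the conditions (\ref{c1})--(\ref{c7}) one by one against the $\mathbf{A}_2^{\ast}$ equations exactly as in the paper's translation table, including the observations that (\ref{c3}) and (\ref{c5}) both encode $vb=\eta_A$ and that $ua=GI(a)v$ is just the statement that $a$ is an $\mathbf{A}_1$-morphism.
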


\begin{proof}
The isomorphism $\left( \ref{iso1}\right) $ is established by the adjuntion $%
\left( F,G,\eta ,\varepsilon \right) $. Given%
\begin{equation*}
A\overset{u}{\longrightarrow }GIA\ \ ,\ \ I\left( u\right) =1_{IA}
\end{equation*}%
we obtain%
\begin{equation}
\xymatrix{
FA
\ar@<1ex>[r]^{\pi'_A}
\ar@<-1ex>[r]_{u'}
&  IA
\ar[l]|{I(\eta_A)}
}
\label{FA,IA}
\end{equation}%
where $\pi _{A}^{\prime }=\varepsilon _{IA}F\left( \pi _{A}\right) $, $%
u^{\prime }=\varepsilon _{IA}F\left( u\right) $ and%
\begin{equation*}
u^{\prime }I\left( \eta _{A}\right) =1_{IA}\Leftrightarrow I\left( u\right)
=1.
\end{equation*}%
Conversely, given $\left( \ref{FA,IA}\right) $, we obtain $A$, since $F$ is
injective on objects, and 
\begin{equation*}
u=G\left( u^{\prime }\right) \eta _{A}.
\end{equation*}%
The isomorphism $\left( \ref{iso2}\right) $ is obtained as follows:\newline
Given $\left( \ref{FE,IE=FA,IA}\right) $, since $F$ in injective on objects
and faithful, we obtain%
\begin{equation*}
\xymatrix{
E
\ar@<1ex>[r]^{a}
& A
\ar[l]^{b}
}%
\ \ \ ,\ \ \ ab=1_{A}\ \ \ \ ,\ \ \ IE=FA.
\end{equation*}%
Now define on the one hand%
\begin{equation*}
u=G\left( c\right) \eta _{A}\ \ ,\ \ v=G\left( m\right) \eta _{E};
\end{equation*}%
while on the other hand%
\begin{equation*}
c=\varepsilon _{IA}F\left( u\right) \ \ ,\ \ m=\varepsilon _{IE}F\left(
v\right) ,
\end{equation*}%
and we have the following translation of equations%
\begin{equation*}
\begin{tabular}{|c|c|c|}
\hline
Eq. n.%
${{}^o}$
& in $\mathbf{B}$ & in $\mathbf{A}$ \\ \hline
\ref{c1} & $cI\left( \eta _{A}\right) =1_{IA}$ & $I\left( u\right) =1_{IA}$
\\ \hline
\ref{c4} & $mI\left( \eta _{E}\right) =1_{IE}$ & $I\left( v\right) =1_{IE}$
\\ \hline
\ref{c6} & $cm=cF\left( a\right) $ & $ua=GI\left( a\right) v$ \\ \hline
\ref{c2} & $I\left( a\right) =c$ & $I\left( a\right) =\varepsilon
_{IA}F\left( u\right) $ \\ \hline
\begin{tabular}{c}
\ref{c3} \\ 
\ref{c5}%
\end{tabular}
& 
\begin{tabular}{c}
$I\left( b\right) =I\left( \eta _{A}\right) $ \\ 
$mF\left( b\right) =1_{IE}$%
\end{tabular}
& $vb=\eta _{A}$ \\ \hline
\ref{c7} & $\pi _{A}^{\prime }m=\pi _{A}^{\prime }\pi _{E}^{\prime }$ & $%
G\left( \pi _{A}^{\prime }\right) \pi _{E}=G\left( \pi _{A}^{\prime }\right)
v$ \\ \hline
\end{tabular}%
\end{equation*}%
Note that $ua=GI\left( a\right) v$ follows from the fact that $a$ is a
morphism in $\mathbf{A}_{1}$, on the contrary of $b$ which is simply a
morphism in $\mathbf{A}$.
\end{proof}

In some cases we also have a functor%
\begin{equation*}
J:\mathbf{A}\longrightarrow \mathbf{B}
\end{equation*}%
satisfying the following three conditions:

\begin{enumerate}
\item $JG=1_{\mathbf{B}}$

\item the pair $\left( J\left( \eta _{A}\right) ,I\left( \eta _{A}\right)
\right) $ is jointly epic for every $A\in \mathbf{A}$, that is, given a pair
of morphisms $\left( f,g\right) $ as displayed below%
\begin{equation*}
\xymatrix{
JA
\ar[r]^{J(\eta_A)}
\ar[rd]_{f}
& FA
\ar@{-->}[d]^{[f \, g]}
& IA
\ar[l]_{I(\eta_A)}
\ar[ld]^{g}
\\
& B &
}%
\end{equation*}%
there is at most one morphism $\alpha :FA\longrightarrow B$, with the
property that $\alpha J\left( \eta _{A}\right) =f$ and $\alpha I\left( \eta
_{A}\right) =g$, denoted by $\alpha =[f,g]$ when it exists. Also the pair $%
\left( f,g\right) $ is said to be admissible (or cooperative in the sense of
Bourn and Gran \cite{Bourn&Gran}) w.r.t. $A$, if $[f,g]$ exists.

\item for every $A,E\in \mathbf{A}$, with $IE=FA$, a morphism, $u:J\left(
E\right) \longrightarrow FA$, such that $\left( u,1_{IE}\right) $ is
cooperative w.r.t. $E$ and satisfying $\pi _{A}[u\ 1]=\pi _{A}\pi _{E}$,
always factors trough $J\left( A\right) $, i.e., given $u$ as in the diagram
below%
\begin{equation*}
\xymatrix{
JE
\ar[r]^{J(\eta_E)}
\ar[rd]^{u}
\ar@{-->}[d]_{\bar{u}}
& FE
\ar@<0.5ex>[r]^{\pi'_E}
\ar@{..>}[d]^{[u \, 1]}
& IE
\ar@<0.5ex>[l]|{I(\eta_E)}
\ar@{=}[ld]
\\
JA
\ar[r]^{J(\eta_A)}
 & FA
\ar[r]^{\pi'_A}
 & IA
}%
\end{equation*}%
such that $[u\ 1]$ exists and $\pi _{A}[u\ 1]=\pi _{A}\pi _{E}$ then $%
u=J\left( \eta _{A}\right) u^{\prime }$ for a unique $u^{\prime
}:JE\longrightarrow JA$.
\end{enumerate}

\begin{theorem}
Let $\mathbf{B}$ be any category, with $\left( I,G,\pi \right) $%
\begin{equation*}
\mathbf{A}\overset{I}{\underset{G}{\underleftarrow{\overrightarrow{\ \ \ \ \
\ \ \ \ \ \ }}}}\mathbf{B\ \ ,\ \ }\pi :1\longrightarrow GI,
\end{equation*}%
a half-reflection such that the functor $G$ admits a left adjoint%
\begin{equation*}
\left( F,G,\eta ,\varepsilon \right) .
\end{equation*}%
If we can find a functor%
\begin{equation*}
J:\mathbf{A}\longrightarrow \mathbf{B}
\end{equation*}%
as above, then

\begin{itemize}
\item the category $RG_{\mathbf{A}}\left( \mathbf{B}\right) $ of reflexive
graphs in $\mathbf{B}$ relative to split epis from $\mathbf{A}$, is given by:%
\newline
Objects are pairs $\left( A,h\right) ,$ with$\ A\in \mathbf{A},$ and $%
h:JA\longrightarrow IA$ a morphism such that $\left( h,1_{IA}\right) $ is
cooperative w.r.t. $A$;\newline
A morphism $f:\left( A,h\right) \longrightarrow \left( A^{\prime },h^{\prime
}\right) $ is a morphism $f:A\longrightarrow A^{\prime }$ in $\mathbf{A}$
such that $h^{\prime }F\left( f\right) =I\left( f\right) h$.

\item the category of internal precategories in $\mathbf{B}$ relative to
split epis from $\mathbf{A}$, $PC_{\mathbf{A}}\left( \mathbf{B}\right) $, is
given by:\newline
Objects: 
\begin{equation*}
\left( A,E,a,b,t,h\right)
\end{equation*}%
where $A,E$, are objects in $\mathbf{A}$ , with $IE=FA$, $a,b,t,h,$ are
morphisms in $\mathbf{B}$,%
\begin{equation*}
\xymatrix{
JE
\ar@<1ex>[r]^{a}
\ar@<-1ex>[r]_{t}
&  JA
\ar[l]|{b}
\ar[r]^{h}
& IA
}%
\end{equation*}%
such that%
\begin{equation*}
ab=1=tb\ \ ,\ \ \ ha=ht
\end{equation*}%
and%
\begin{eqnarray*}
&&\left( h,1_{IA}\right) \ \ \text{and }\left( J\left( \eta _{E}\right)
b,I\left( \eta _{E}\right) I\left( \eta _{A}\right) \right) \ \ \ \text{are
cooperative w.r.t. }A \\
&&\left( J\left( \eta _{A}\right) a,I\left( \eta _{A}\right) [h\ 1]\right) \
\ \text{and }\left( J\left( \eta _{A}\right) t,1_{I\left( E\right) }\right)
\ \ \ \text{are cooperative w.r.t. }E
\end{eqnarray*}%
Morphisms are triples $\left( f_{3},f_{2},f_{1}\right) $ of morphisms%
\begin{equation*}
\xymatrix{
JE
\ar@<1ex>[r]^{a}
\ar@<-1ex>[r]_{t}
\ar[d]^{f_3}
&  JA
\ar[l]|{b}
\ar[r]^{h}
\ar[d]^{f_2}
& IA
\ar[d]^{f_1}
\\
JE'
\ar@<1ex>[r]^{a'}
\ar@<-1ex>[r]_{t'}
&  JA'
\ar[l]|{b'}
\ar[r]^{h'}
& IA'
}%
\end{equation*}%
such that the obvious squares in the above diagram commute and furthermore
the pair $\left( J\left( \eta _{A^{\prime }}\right) f_{2},I\left( \eta
_{A^{\prime }}\right) f_{1}\right) $ is admissible w.r.t. $A$ while the pair 
$\left( J\left( \eta _{E^{\prime }}\right) f_{3},I\left( \eta _{E^{\prime
}}\right) [J\left( \eta _{A^{\prime }}\right) f_{2}\ \ I\left( \eta
_{A^{\prime }}\right) f_{1}]\right) $ is admissible w.r.t. $E$.
\end{itemize}
\end{theorem}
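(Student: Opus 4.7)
The plan is to invoke the preceding theorem to obtain $RG_{\mathbf{A}}(\mathbf{B})\cong\mathbf{A}_1$ and $PC_{\mathbf{A}}(\mathbf{B})\cong\mathbf{A}_2^*$, and then rewrite the data of $\mathbf{A}_1$ and $\mathbf{A}_2^*$ in terms of morphisms into $\mathbf{B}$ starting from $J$-values. The two key tools are property~2 of $J$ (which says $(J(\eta_A),I(\eta_A))$ is jointly epic, so a morphism $FA\to B$ is encoded, when admissible, by its two composites $f=\alpha J(\eta_A)$, $g=\alpha I(\eta_A)$ as $\alpha=[f\,g]$) and property~3 (which factors $u:JE\to FA$ through $J(\eta_A)$ whenever the associated $[u\,1]:FE\to FA$ satisfies $\pi'_A[u\,1]=\pi'_A\pi'_E$).

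For reflexive graphs, I would transpose $u:A\to GIA$ across the adjunction to $u'=\varepsilon_{IA}F(u):FA\to IA$, for which $I(u)=1_{IA}$ is equivalent to $u'I(\eta_A)=1_{IA}$. Setting $h:=u'J(\eta_A):JA\to IA$ and invoking property~2, the pair $(h,1_{IA})$ is cooperative with respect to $A$ and reconstructs $u'=[h\,1]$; this correspondence is bijective by uniqueness of the factorization. The morphism condition $GI(f)\circ u=u'\circ f$ in $\mathbf{A}_1$ transposes, after precomposition with $J(\eta_A)$ and use of the naturality $F(f)J(\eta_A)=J(\eta_{A'})J(f)$ (which holds because $JG=1_{\mathbf{B}}$), to the stated compatibility between $h$, $h'$ and the action of $f$.

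For precategories, the same transposition turns $v:E\to GIE$ (with $IE=FA$) into $m:FE\to IE$ satisfying $mI(\eta_E)=1_{IE}$, so property~2 gives $m=[\tilde m\,1_{IE}]$ with $\tilde m=mJ(\eta_E):JE\to FA$. The last defining equation of $\mathbf{A}_2^*$, namely $G(\pi'_A)\pi_E=G(\pi'_A)v$, transposes to $\pi'_Am=\pi'_A\pi'_E$, which is precisely the hypothesis of property~3; thus a unique $t:JE\to JA$ appears with $\tilde m=J(\eta_A)t$, and this $t$ is the required component of the precategory data. In the same way I would extract $a:JE\to JA$ and $b:JA\to JE$ from $F(a)$ and $F(b)$ through their cooperative factorizations, using that $I(a)=\varepsilon_{IA}F(u)$ and $vb=\eta_A$ prescribe the shapes of those factorizations as $F(a)=[J(\eta_A)a\,\,I(\eta_A)[h\,1]]$ and $F(b)=[J(\eta_E)b\,\,I(\eta_E)I(\eta_A)]$. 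Then $ab=1_A$ in $\mathbf{A}$ yields $ab=1$ in $\mathbf{B}$ by $F$-functoriality followed by precomposition with $J(\eta_E)$, equation~(\ref{c5}) yields $tb=1$, and equation~(\ref{c6}) pulled back along $J(\eta_E)$ yields $ha=ht$; the remaining equations (\ref{c1})--(\ref{c4}) and (\ref{c7}) are either automatic or are absorbed into the cooperative conditions.

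The hard part is bookkeeping: each cooperative requirement on $(h,1_{IA})$, $(J(\eta_E)b,I(\eta_E)I(\eta_A))$, $(J(\eta_A)a,I(\eta_A)[h\,1])$, and $(J(\eta_A)t,1_{IE})$ must be matched precisely to the existence of $c$, $F(b)$, $F(a)$, and $m$ respectively in the original presentation of $PC_{\mathbf{A}}(\mathbf{B})$, and the morphism part of the classification has to be obtained by carrying the same translation through a morphism of $\mathbf{A}_2^*$-objects. The single non-routine step is the one use of property~3 producing $t$; everything else is a direct application of the universal property in property~2 together with the $F\dashv G$ adjunction.
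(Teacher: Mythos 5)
Your proposal follows the same route as the paper: invoke the preceding isomorphisms $\mathbf{A}_1\cong RG_{\mathbf{A}}(\mathbf{B})$ and $\mathbf{A}_2^*\cong PC_{\mathbf{A}}(\mathbf{B})$, then decode $c$, $m$, $F(a)$, $F(b)$ via the joint epicness of $(J(\eta),I(\eta))$ and extract $t$ from $m$ by property~3 applied to condition (\ref{c7}) — exactly the dictionary $c=[h\ 1]$, $m=[J(\eta_A)t\ 1_{IE}]$, $F(b)=[J(\eta_E)b\ \ I(\eta_E)I(\eta_A)]$, $F(a)=[J(\eta_A)a\ \ I(\eta_A)[h\ 1]]$ that the paper records. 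Your write-up is in fact more explicit than the paper's two-line proof, and the one step you single out as non-routine (the use of property~3 to produce $t$) is indeed the crux.
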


\begin{proof}
Calculations are similar to the previous sections and the resulting diagram $%
\left( \ref{FE,IE=FA,IA}\right) $ is given by%
\begin{eqnarray*}
c &=&[h\ 1] \\
m &=&[J\left( \eta _{A}\right) t\ \ 1_{I\left( E\right) }] \\
F\left( b\right) &=&[J\left( \eta _{E}\right) b\ \ I\left( \eta _{E}\right)
I\left( \eta _{A}\right) ] \\
F\left( a\right) &=&[J\left( \eta _{A}\right) a\ \ I\left( \eta _{A}\right)
[h\ 1]].
\end{eqnarray*}%
The same argument applies to obtain the morphisms.
\end{proof}

\subsection{The example of unitary magmas with right cancellation}

An example of a general situation in the conditions of the above theorem is
the following one.

Let $\mathbf{B}$ be a pointed category with kernels of split epis, with
binary products and coproducts and such that the pair $\left( \left\langle
1,0\right\rangle ,\left\langle 1,1\right\rangle \right) $, as displayed%
\begin{equation*}
\xymatrix{
B
\ar[r]^{ <1,0> }
& B \times B
\ar@<0.5ex>[r]^{\pi'_2}
& B
\ar@<0.5ex>[l]^{ <1,1> }
}%
\end{equation*}%
is jointly epic for every $B\in \mathbf{B}$, and then consider: $\mathbf{A}$%
, the full subcategory of Pt$\left( \mathbf{B}\right) $ given by the split
epis with the property that 
\begin{equation*}
\xymatrix{
X
\ar[r]^{ \ker \alpha}
& A
\ar@<0.5ex>[r]^{\alpha}
& B
\ar@<0.5ex>[l]^{ \beta }
}%
\end{equation*}%
the pair $\left( \ker \alpha ,\beta \right) $ is jointly epic (identifying $%
\left( A,\alpha ,\beta ,B\right) $ with $\left( A^{\prime },\alpha ^{\prime
},\beta ^{\prime },B\right) $ whenever $A\cong A^{\prime }$, in order to
obtain $F$ injective on objects).

Then we have functors%
\begin{eqnarray*}
I,F,J &:&\mathbf{A}\longrightarrow \mathbf{B} \\
G &:&\mathbf{B}\longrightarrow \mathbf{A}
\end{eqnarray*}%
with%
\begin{eqnarray*}
I\left( A,\alpha ,\beta ,B\right) &=&B \\
F\left( A,\alpha ,\beta ,B\right) &=&A \\
J\left( A,\alpha ,\beta ,B\right) &=&X\ \ ,\text{the object kernel of }\alpha
\\
G\left( B\right) &=&\left( B\times B,\pi _{2},\left\langle 1,1\right\rangle
,B\right)
\end{eqnarray*}%
and with $\pi :1_{\mathbf{A}}\longrightarrow GI$ given by $\pi =[0\ 1]$.

An example of such a category is the category of unitary magmas with right
cancellation. Also every strongly unital category satisfies the above
requirements (see \cite{BB}, and references there).

\end{document}